\DeclareMathAlphabet{\mathmybb}{U}{bbold}{m}{n}
\newtheorem{lem}{Lemma}
\newtheorem{thm}{Theorem}
\newtheorem{deff}{Definition}
\newtheorem{cor}{Corollary}
\newcommand{\beq}{\begin{equation}}
\newcommand{\eeq}{\end{equation}}
\newcommand{\E}{\mathbb{E}}
\DeclareFontFamily{U}{mathx}{\hyphenchar\font45}
\DeclareFontShape{U}{mathx}{m}{n}{
	<5> <6> <7> <8> <9> <10>
	<10.95> <12> <14.4> <17.28> <20.74> <24.88>
	mathx10
}{}
\DeclareSymbolFont{mathx}{U}{mathx}{m}{n}
\DeclareMathSymbol{\bigtimes}{1}{mathx}{"91}
\newcommand{\ilsk}[2]{\left\langle #1\mid #2\right\rangle}
\newcommand{\prevfiltr}[1]{\mathcal{#1}_{\mathrm{prev}}}
\newcommand{\sgn}{\mathrm{sgn}\,}
\def\P{\mathbb{P}}
\def\Z{\mathbb{Z}}
\def\N{\mathbb{N}}
\def\sup{{\rm sup}}
\def\inf{{\rm inf}}
\renewcommand{\and}[0]{\ \&\ }
\title{One-sided Davis inequality for (F4) filtrations}
\author{Maciej Rzeszut\thanks{maciej.rzeszut@gmail.com}}
\affil{Military University of Technology, Warsaw}
\begin{document}
\maketitle

\abstract{The classical Davis inequality $\E Mf\simeq \E Sf$, where $(Sf)^2=\sum_{k}\left|f_{k}-f_{k-1}\right|^2$ is the square function and $Mf= \sup_n \left|f_n\right|$ is the maximal function, is true with a universal constant for any martingale $f$ on any filtration. A natural analog in the setting of (F4) doubly indexed filtrations, i.e. $\left(\mathcal{F}_{i,j}\right)_{i,j}$ such that the operators $\mathbb{E}\left(\cdot\mid \mathcal{F}_{i,\infty}\right)$ and $\E\left(\cdot\mid \mathcal{F}_{\infty,j}\right)$ commute and their product is $\E\left(\cdot\mid \mathcal{F}_{i,j}\right)$, is the conjecture
\[\mathbb{E}\sup_{n,m} \left|f_{n,m}\right|\simeq\mathbb{E}\left(\sum_{i,j}\left|\underbrace{f_{i,j}-f_{i-1,j}-f_{i,j-1}+f_{i-1,j-1}}_{\Delta f_{i,j}}\right|^2\right)^\frac{1}{2}.\]
It was known to be true only with some highly restrictive additional assumptions, e.g. regularity of the filtration ($g_{n,m}\gtrsim g_{n+1,m},g_{n,m+1}$ for any positive martingale $g$) or $f$ being a strong martingale ($\mathbb{E}\left(\Delta f_{i,j}\mid \mathcal{F}_{i-1,j}\vee \mathcal{F}_{i,j-1}\right)=0$). We prove the inequality $\lesssim $ assuming just the (F4) condition.}

\section{Introduction}
We will work with discrete-time filtrations. In a usual one-parameter setting, given an nondecreasing sequence $\mathcal{F}=\left(\mathcal{F}_k\right)_{\in\N}$ of $\sigma$-algebras such that $\{\emptyset,\Omega\}=\mathcal{F}_0\subseteq \mathcal{F}_1\subseteq \mathcal{F}_2\subseteq \ldots\subseteq \mathcal{F}_{\infty}:=\bigvee_{k=1}^\infty \mathcal{F}_k$, we will consider conditional expectation operators $\E_{\mathcal{F}_k}$ acting on $L^1(\Omega,\mathcal{F}_\infty,\P)$ and abbreviate to $\E_k$ when the filtration is implied. The martingale difference operators are given by 
\[\Delta_k =\begin{cases} \E_k-\E_{k-1} \text{ for }k\geq 1\\ \E_0=\E\text{ for }k=0\end{cases}\]
so that $f=\sum_{k=0}^\infty \Delta_k f$. We define the square function and the conditional square function, respectively by
\[S_\mathcal{F}f=\left(\sum_k\left|\Delta_k f\right|^2\right)^\frac{1}{2},\qquad s_\mathcal{F}f=\left(\sum_k\E_{k-1}\left|\Delta_k f\right|^2\right)^\frac{1}{2}.\]
Here, $\E_{-1}=\E_0$. Moreover, for any (not necessarily ordered) indexed family $\mathcal{G}=\left(\mathcal{G}_k\right)_{k\in J}$ of $\sigma$-algebras, $M_\mathcal{G}$ will denote its maximal function, i.e. 
\[M_\mathcal{G}f=\sup_k \left|\E_{\mathcal{G}_k}f\right|.\]
In terms of the three sublinear operators above, we define the respective Hardy spaces by the norms
\[\|f\|_{H^1_S[\mathcal{F}]}=\|S_\mathcal{F}f\|_{L^1},\quad \|f\|_{H^1_s[\mathcal{F}]}=\|s_\mathcal{F}f\|_{L^1},\quad \|f\|_{H^1_M[\mathcal{F}]}=\|M_\mathcal{F}f\|_{L^1}\]
and omit the filtration whenever implied. \par
The following is a classical result.
\begin{thm}[Davis inequality]
For any $f\in L^1$ and a one-parameter filtration $\mathcal{F}$,
\[\|f\|_{H^1_M[\mathcal{F}]}\simeq \|f\|_{H^1_S[\mathcal{F}]}.\]
\end{thm}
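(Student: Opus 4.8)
The plan is to prove the two inequalities $\|f\|_{H^1_M[\mathcal{F}]}\lesssim\|f\|_{H^1_S[\mathcal{F}]}$ and $\|f\|_{H^1_S[\mathcal{F}]}\lesssim\|f\|_{H^1_M[\mathcal{F}]}$ separately, by the Burkholder--Gundy good-$\lambda$ method, bootstrapping from the elementary $L^2$ equivalence. Writing $f_n=\E_n f$, one has $\E\left(S_\mathcal{F}f\right)^2=\sum_k\E\left|\Delta_k f\right|^2=\left\|f\right\|_{L^2}^2$ together with Doob's maximal inequality $\left\|f\right\|_{L^2}\le\left\|M_\mathcal{F}f\right\|_{L^2}\le 2\left\|f\right\|_{L^2}$, so that $\left\|M_\mathcal{F}f\right\|_{L^2}\simeq\left\|S_\mathcal{F}f\right\|_{L^2}$ with absolute constants; the content of the theorem is to descend to the $L^1$ endpoint, where this $L^2$ argument fails.

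For the direction $\|f\|_{H^1_M[\mathcal{F}]}\lesssim\|f\|_{H^1_S[\mathcal{F}]}$ I would establish the good-$\lambda$ inequality
\[\P\left(M_\mathcal{F}f>2\lambda,\ S_\mathcal{F}f\le\delta\lambda\right)\le C\,\delta^2\,\P\left(M_\mathcal{F}f>\lambda\right),\qquad\lambda>0,\ 0<\delta<\tfrac12.\]
Let $T=\inf\{k:\left|f_k\right|>\lambda\}$, so that $\{M_\mathcal{F}f>\lambda\}=\{T<\infty\}$, and consider the post-$T$ martingale $g_n=f_n-f_{n\wedge T}$. On the event on the left $T<\infty$, and since $\left|\Delta_T f\right|\le S_\mathcal{F}f\le\delta\lambda$ one has $\left|f_T\right|\le(1+\delta)\lambda$; hence $g$ must reach level $(1-\delta)\lambda$ while $S_\mathcal{F}g\le S_\mathcal{F}f\le\delta\lambda$. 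Stopping $g$ the first time its square function would exceed $\delta\lambda$ and applying Doob's $L^2$ inequality conditionally on $\mathcal{F}_T$ bounds the conditional probability of this by $C\delta^2/(1-\delta)^2$; integrating over $\{T<\infty\}$ gives the displayed estimate. It implies $\P\left(M_\mathcal{F}f>2\lambda\right)\le C\delta^2\P\left(M_\mathcal{F}f>\lambda\right)+\P\left(S_\mathcal{F}f>\delta\lambda\right)$, and integrating in $\lambda$ gives $\tfrac12\E M_\mathcal{F}f\le C\delta^2\E M_\mathcal{F}f+\tfrac1\delta\E S_\mathcal{F}f$; for $\delta$ small enough -- and after replacing $f$ by $f$ stopped the first time $\left|f_k\right|>N$ and letting $N\to\infty$, which is what legitimises moving $\E M_\mathcal{F}f$ to the left -- this yields $\|f\|_{H^1_M[\mathcal{F}]}\lesssim\|f\|_{H^1_S[\mathcal{F}]}$. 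The reverse inequality is symmetric: one proves $\P\left(S_\mathcal{F}f>2\lambda,\ M_\mathcal{F}f\le\delta\lambda\right)\le C\delta^2\P\left(S_\mathcal{F}f>\lambda\right)$ by stopping $f$ the first time $\left(\sum_{j\le k}\left|\Delta_j f\right|^2\right)^{1/2}$ exceeds $\lambda$, using that the increment there has size $\le 2M_\mathcal{F}f\le 2\delta\lambda$ so that the remaining part of the square function still has to grow past $\approx\lambda$, bounding its second moment conditionally by Doob, and integrating as before.

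The main obstacle is the control of the overshoot increment at the stopping times: neither $\Delta_T f$ in the first argument nor the increment at the square-function stopping time in the second is bounded by the stopping rule itself, and it is precisely the side condition ($S_\mathcal{F}f\le\delta\lambda$, resp.\ $M_\mathcal{F}f\le\delta\lambda$) on the event under consideration that tames it; turning this into an honest conditional-expectation estimate, rather than just a pointwise bound valid on that event, requires care, typically a preliminary reduction to martingales with bounded increments. For the inequality $\lesssim$ -- the one relevant to this paper -- the difficulty can be bypassed by the Davis decomposition $f=g+h$, where
\[\Delta_k g=\Delta_k f\cdot\mathbf{1}_{\left\{\left|\Delta_k f\right|>2\max_{j<k}\left|\Delta_j f\right|\right\}}-\E_{k-1}\!\left[\Delta_k f\cdot\mathbf{1}_{\left\{\left|\Delta_k f\right|>2\max_{j<k}\left|\Delta_j f\right|\right\}}\right].\]
Since $\left|\Delta_k f\right|\mathbf{1}_{\left\{\left|\Delta_k f\right|>2\max_{j<k}\left|\Delta_j f\right|\right\}}\le 2\left(\max_{j\le k}\left|\Delta_j f\right|-\max_{j<k}\left|\Delta_j f\right|\right)$, summing and taking expectations gives $\E\sum_k\left|\Delta_k g\right|\le 4\,\E\max_k\left|\Delta_k f\right|\le 4\|f\|_{H^1_S[\mathcal{F}]}$, hence $\|g\|_{H^1_M[\mathcal{F}]}\le\E\sum_k\left|\Delta_k g\right|\lesssim\|f\|_{H^1_S[\mathcal{F}]}$; meanwhile $h=f-g$ has the \emph{predictable} increment bound $\left|\Delta_k h\right|\le 4\max_{j<k}\left|\Delta_j f\right|$ and $\|h\|_{H^1_S[\mathcal{F}]}\lesssim\|f\|_{H^1_S[\mathcal{F}]}$, and for such $h$ the good-$\lambda$ scheme above runs without the overshoot difficulty -- the predictable bound makes the increment at any stopping time automatically of the right order -- so $\|h\|_{H^1_M[\mathcal{F}]}\lesssim\|h\|_{H^1_S[\mathcal{F}]}+\E\max_k\left|\Delta_k f\right|\lesssim\|f\|_{H^1_S[\mathcal{F}]}$, and this direction of the theorem follows by subadditivity of $M_\mathcal{F}$.
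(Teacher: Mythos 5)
The paper does not prove this theorem at all: it is stated as ``a classical result'' and used as a black box, so there is no internal proof to compare against. Your argument is the standard one (Davis's original good-$\lambda$/decomposition scheme, as in Burkholder--Davis--Gundy or Garsia's book) and its outline is correct. The key computation checks out: on $\left\{\left|\Delta_k f\right|>2\max_{j<k}\left|\Delta_j f\right|\right\}$ one has $\left|\Delta_k f\right|\le 2\left(\max_{j\le k}\left|\Delta_j f\right|-\max_{j<k}\left|\Delta_j f\right|\right)$, the sum telescopes to $2\max_k\left|\Delta_k f\right|\le 2S_\mathcal{F}f$, the compensator costs another factor of $2$, and $h=f-g$ inherits the predictable bound $\left|\Delta_k h\right|\le 4\max_{j<k}\left|\Delta_j f\right|$, which is exactly what makes the good-$\lambda$ inequality for $h$ legitimate (one runs it with the predictable majorant adjoined to the side condition, whose $L^1$ norm is again controlled by $\E\max_k\left|\Delta_k f\right|\le\|f\|_{H^1_S}$). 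You correctly identify the overshoot at the stopping time as the one genuine technical issue and correctly note that the decomposition, not the raw good-$\lambda$ inequality, is what resolves it. Two caveats: the direction $\|f\|_{H^1_S}\lesssim\|f\|_{H^1_M}$ is left as a symmetry claim, but it is not literally symmetric --- it needs its own Davis-type decomposition with the indicator $\mathbf{1}_{\left\{\left|\Delta_k f\right|>2\max_{j<k}\left|f_j\right|\right\}}$ and the bound $\left|\Delta_k f\right|\le 2\max_{j\le k}\left|f_j\right|$, so as written that half is a sketch of a sketch; and the preliminary truncation/stopping needed to justify absorbing $C\delta^2\E M_\mathcal{F}f$ into the left-hand side deserves one explicit line. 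Neither point affects the direction $\|f\|_{H^1_M}\lesssim\|f\|_{H^1_S}$, which is the one this paper actually generalises.
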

In particular, if a martingale admits an $L^1$ square function, then by the integrability of its maximal function, it is in fact closed (by an $L^1$ function). \par
Now take a doubly indexed sequence $\left(\mathcal{F}_{i,j}\right)_{i,j=0}^\infty$ such that $\mathcal{F}_{i,j}\subseteq \mathcal{F}_{i+1,j}\cap \mathcal{F}_{i,j+1}$. We will denote 
\[\mathcal{F}_{i,\infty}=\bigvee_{j=0}^\infty \mathcal{F}_{i,j},\qquad \mathcal{F}_{\infty,j}=\bigvee_{i=0}^\infty \mathcal{F}_{i,j}.\]
A usual assumption put on $\mathcal{F}_{i,j}$ is the so-called (F4) condition
\[\E_{i,j}\E_{k,l}=\E_{\min(i,k),\min(j,l)}=\E_{k,l}\E_{i,j}.\]
It is easily seen to be equivalent to the conditional independence of $\mathcal{F}_{i,\infty}$ and $\mathcal{F}_{\infty,j}$ given $\mathcal{F}_{i,j}$. For even more notational confusion, we will denote the entirety of the filtration $\left(\mathcal{F}_{i,\infty}\right)_{i=0}^\infty$ by $\mathcal{F}^{(1)}$ and $\left(\mathcal{F}_{\infty,j}\right)_{j=0}^\infty$ by $\mathcal{F}^{(2)}$. For a filtration $\mathcal{G}$ indexed by $\N$, by $\mathcal{G}_{\mathrm{prev}}$ we will denote $\left(\mathcal{G}_{\max(k-1,0)}\right)_{k\in\N}$, and if $\mathcal{G}$ is indexed by $\N^2$, then $\prevfiltr{G}=\left(\mathcal{G}_{\max(n-1,0),\max(m-1,0)}\right)_{n,m\in \N^2}$. \par
A natural analog of martingale difference operators are
\[\Delta_{i,j}= \left(\E_{i,\infty}-\E_{i-1,\infty}\right)\left(\E_{\infty,j}-\E_{\infty,j-1}\right)=\E_{i,j}-\E_{i-1,j}-\E_{i,j-1}+\E_{i-1,j-1}.\]
In particular, $\Delta_{i,j}= \Delta_{i,\infty}\Delta_{\infty,j}$ is the composition of martingale difference opeators with repsect to $\mathcal{F}^{(1)}$ and $\mathcal{F}^{(2)}$. The square functions and the respective Hardy spaces are defined in the same way as in the one-parameter case:
\[Sf=\left(\sum_{i,j}\left|\Delta_{i,j}f\right|^2\right)^\frac{1}{2},\qquad sf=\left(\sum_{i,j}\E_{i-1,j-1}\left|\Delta_{i,j}f\right|^2\right)^\frac{1}{2}.\]
It is known (see e.g. \cite{weiszbook} or \cite{brossard}) that 
\[ \|f\|_{H^1_s}\gtrsim \|f\|_{H^1_S},\|f\|_{H^1_M}.\]
However, the inequality $\|f\|_{H^1_S}\simeq \|f\|_{H^1_M}$ was known to be true only with some additional assumptions: either for both $\mathcal{F}^{(1)}, \mathcal{F}^{(2)}$ being regular, or $f$ being a strong martingale, i.e. $\E_{\mathcal{F}_{i-1,j}\vee \mathcal{F}_{i,j-1}}\Delta_{i,j} f=0$. \par
Our goal is to prove the inequality 
\[\|f\|_{H^1_S}\gtrsim\|f\|_{H^1_M}\]
without any additional assumptions. The proof is heavily inspired by Dilworth \cite{dilworth}. 

\section{Preliminaries}

\subsection{Banach lattices}
\begin{deff}
Let $X$ be a (quasi-)Banach lattice. The $\alpha$-convexification of $X$ is the space $X^{(\alpha)}$ of functions $f$ such that 
\[ \|f\|_{X^{(\alpha)}}:= \left\| |f|^\alpha \right\|^{1/\alpha} \]
is finite.
\end{deff}

\begin{deff}
Let $X,Y$ be (quasi-)Banach lattices of functions such that the algebraic sum $X+Y$ makes sense. The interpolation sum $X+Y$ is defined by the norm
\[\|f\|_{X+Y}=\inf_{\substack{ f=a+b \\ a\in X,b\in Y }}\|a\|_X+\|b\|_Y,\]
while the intersection $X\cap Y$ is defined by the norm
\[\|f\|_{X\cap Y}=\|f\|_X+\|f\|_Y.\]
\end{deff}
One can easily check that \[ (X+Y)^{(\alpha)}=X^{(\alpha)}+Y^{(\alpha)},\qquad (X\cap Y)^{(\alpha)}=X^{(\alpha)}\cap Y^{(\alpha)}\]
with constants in norm equivalences depending on $\alpha$. Moreover, if $X$ and $Y$ are Banach and $X\cap Y$ is dense in both of them, then \[(X+Y)^*=X^*\cap Y^*, \qquad (X\cap Y)^*= X^*+Y^*.\]

\subsection{Conditional independence}
Recal that we call two $\sigma$-algebras $\mathcal{F},\mathcal{G}$ \emph{conditionally independent} (given $\mathcal{F}\wedge\mathcal{G}$) if 
\[\E_\mathcal{F}\E_{\mathcal{G}}= \E_{\mathcal{F}\wedge\mathcal{G}}= \E_\mathcal{G}\E_{\mathcal{F}}.\]
This is easily understood if $\mathcal{F}\wedge\mathcal{G}$ is purely atomic, i.e. $\Omega=\dot\bigcup_{k\in K}A_k$ and $\mathcal{F}\wedge\mathcal{G}$ is generated by $A_k$'s. In this case, $\mathcal{F},\mathcal{G}$ are independent if restricted to each of the $A_k$'s as a new probability space. 
\begin{lem}
	For any $\sigma$-algebras $\mathcal{F},\mathcal{G}$ on the same probability space, the linear span of the set $\left\{\mathbbm{1}_{A}\cdot\mathbbm{1}_{B}:A\in\mathcal{F},B\in\mathcal{G}\right\}$ is dense in $L^2\left(\mathcal{F}\vee \mathcal{G}\right)$.
\end{lem}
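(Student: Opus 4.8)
The plan is to reduce the statement to a standard approximation argument about how $\sigma$-algebras are generated. Recall that $\mathcal{F}\vee\mathcal{G}$ is by definition the smallest $\sigma$-algebra containing all sets $A\cap B$ with $A\in\mathcal{F}$, $B\in\mathcal{G}$; equivalently, it is generated by the algebra $\mathcal{A}$ of finite disjoint unions of such ``rectangles.'' Since $L^2$ of a $\sigma$-algebra generated by an algebra $\mathcal{A}$ is the closed linear span of $\{\mathbbm{1}_E : E\in\mathcal{A}\}$ (a consequence of the monotone class theorem / density of simple functions), it suffices to show that each $\mathbbm{1}_{A\cap B}=\mathbbm{1}_A\cdot\mathbbm{1}_B$ lies in the claimed span — which is immediate — so the span contains all $\mathbbm{1}_E$, $E\in\mathcal{A}$, and hence is dense in $L^2(\mathcal{F}\vee\mathcal{G})$.

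Concretely, the steps I would carry out are: (i) observe $\mathbbm{1}_{A\cap B}=\mathbbm{1}_A\mathbbm{1}_B$, so every indicator of a rectangle is in the span $V:=\mathrm{span}\{\mathbbm{1}_A\mathbbm{1}_B : A\in\mathcal{F}, B\in\mathcal{G}\}$; (ii) note that finite disjoint unions of rectangles therefore have indicators in $V$, so $V$ contains $\mathbbm{1}_E$ for every $E$ in the generating algebra $\mathcal{A}$; (iii) invoke the standard fact that simple functions measurable with respect to $\sigma(\mathcal{A})=\mathcal{F}\vee\mathcal{G}$ are $L^2$-dense in $L^2(\mathcal{F}\vee\mathcal{G})$, and that any $\mathcal{A}$-measurable set can be approximated in measure by elements of $\mathcal{A}$ (again monotone class / $\pi$-$\lambda$); combining, $\overline{V}^{L^2}\supseteq L^2(\mathcal{F}\vee\mathcal{G})$, and the reverse inclusion is trivial since each $\mathbbm{1}_A\mathbbm{1}_B$ is $(\mathcal{F}\vee\mathcal{G})$-measurable and bounded.

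The only mildly delicate point — and the one I would state carefully rather than hand-wave — is step (iii): that $\{\mathbbm{1}_E : E\in\mathcal{A}\}$ is total in $L^2(\sigma(\mathcal{A}))$ when $\mathcal{A}$ is an algebra. This is where the measure-theoretic content sits; one way is to let $\mathcal{M}=\{E\in\sigma(\mathcal{A}) : \mathbbm{1}_E\in\overline{V}\}$, check that $\mathcal{M}$ is a $\lambda$-system (using $L^2$-continuity to handle monotone limits, and $\mathbbm{1}_{E^c}=\mathbbm{1}_\Omega-\mathbbm{1}_E$ with $\Omega\in\mathcal{A}$), note $\mathcal{M}\supseteq\mathcal{A}$ with $\mathcal{A}$ a $\pi$-system, and conclude $\mathcal{M}=\sigma(\mathcal{A})$ by Dynkin's lemma; then simple $(\mathcal{F}\vee\mathcal{G})$-functions lie in $\overline{V}$, and these are dense in $L^2$. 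I do not expect any real obstacle here — it is a routine application of Dynkin's $\pi$-$\lambda$ theorem — but it is worth writing out because the whole lemma is essentially this observation, packaged so that it can later be combined with the conditional-independence identity $\E_\mathcal{F}\E_\mathcal{G}=\E_{\mathcal{F}\wedge\mathcal{G}}=\E_\mathcal{G}\E_\mathcal{F}$ to reason about operators on $L^2(\mathcal{F}\vee\mathcal{G})$ by checking them on the elementary tensors $\mathbbm{1}_A\mathbbm{1}_B$.
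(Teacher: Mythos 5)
Your proof is correct and follows essentially the same route as the paper: both arguments observe that $\mathbbm{1}_{A\cap B}=\mathbbm{1}_A\mathbbm{1}_B$, form the collection of sets whose indicators lie in the $L^2$-closure of the span, verify it is a $\lambda$-system containing the $\pi$-system of rectangles, and conclude by Dynkin's lemma together with density of simple functions. The detour through the algebra of finite disjoint unions is harmless but unnecessary, since the $\pi$-$\lambda$ theorem applies directly to the $\pi$-system $\{A\cap B\}$.
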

\begin{proof}
	Let 
	\[R= \overline{\mathrm{span}}\left\{\mathbbm{1}_{A}\cdot\mathbbm{1}_{B}:A\in\mathcal{F},B\in\mathcal{G}\right\},\]
	\[\mathcal{P}=\left\{A\cap B: A\in\mathcal{F},B\in\mathcal{G}\right\},\]
	\[\mathcal{Q}=\left\{A\in \mathcal{F}\vee \mathcal{G}: \mathbbm{1}_{A}\in R\right\}.\]
	It is easily seen that $\mathcal{P}$ is a $\pi$-system while $\mathcal{Q}$ is a $\lambda$-system. Therefore, $\mathcal{Q}\supseteq \sigma(\mathcal{P})= \mathcal{F}\vee\mathcal{G}$, so $R$ is a closed subspace of $L^2\left(\mathcal{F}\vee \mathcal{G}\right)$ that contains all the measurable indicator functions, so $R=L^2\left(\mathcal{F}\vee \mathcal{G}\right)$. 
\end{proof}

\begin{lem}\label{fveeg}
Let $\mathcal{F},\mathcal{G}$ be conditionally independent. Then for any $X$,
\[\E_\mathcal{F} \left(\E_\mathcal{G} X^2\right)^\frac{1}{2}\geq \left(\E_\mathcal{G}\left(\E_\mathcal{F} X\right)^2\right)^\frac{1}{2}.\]
\end{lem}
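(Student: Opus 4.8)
The plan is to reduce both sides to conditional expectations onto $\mathcal{H}:=\mathcal{F}\wedge\mathcal{G}$ and then combine the dual description of a conditional $L^2$ norm with the conditional Cauchy--Schwarz inequality. The conditional independence hypothesis will enter only through the identity that $\E_\mathcal{F}$ (resp.\ $\E_\mathcal{G}$) restricts to $\E_\mathcal{H}$ on $\mathcal{G}$-measurable (resp.\ $\mathcal{F}$-measurable) functions; everything else is bookkeeping with standard (conditional) inequalities.

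First I would observe that both sides are $\mathcal{H}$-measurable. Indeed $(\E_\mathcal{G}X^2)^{1/2}$ is $\mathcal{G}$-measurable, and for $\mathcal{G}$-measurable $V$ one has $\E_\mathcal{F}V=\E_\mathcal{F}\E_\mathcal{G}V=\E_\mathcal{H}V$; symmetrically $(\E_\mathcal{F}X)^2$ is $\mathcal{F}$-measurable, so $\E_\mathcal{G}(\E_\mathcal{F}X)^2=\E_\mathcal{H}(\E_\mathcal{F}X)^2$. Hence it suffices to prove
\[\E_\mathcal{H}\left(\E_\mathcal{G}X^2\right)^{1/2}\ \geq\ \left(\E_\mathcal{H}\left(\E_\mathcal{F}X\right)^2\right)^{1/2}.\]
Next I would dualize the right-hand side: for any $V$ with $\E_\mathcal{H}V^2<\infty$ one has $(\E_\mathcal{H}V^2)^{1/2}=\sup_W\E_\mathcal{H}(VW)$, the supremum taken over $W$ with $\E_\mathcal{H}W^2\le1$, and it is attained at $W=V(\E_\mathcal{H}V^2)^{-1/2}$ (with $0/0:=0$), which is measurable with respect to any $\sigma$-algebra containing $\mathcal{H}$ against which $V$ is measurable. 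Applied with $V=\E_\mathcal{F}X$, it is enough to show that every $\mathcal{F}$-measurable $W$ with $\E_\mathcal{H}W^2\le1$ satisfies $\E_\mathcal{H}\!\big((\E_\mathcal{F}X)W\big)\le\E_\mathcal{H}(\E_\mathcal{G}X^2)^{1/2}$.

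For this I would compute, using in turn the pull-out property ($W$ is $\mathcal{F}$-measurable), the tower property for $\mathcal{H}\subseteq\mathcal{F}$ and $\mathcal{H}\subseteq\mathcal{G}$, and conditional Cauchy--Schwarz relative to $\mathcal{G}$:
\[\E_\mathcal{H}\big((\E_\mathcal{F}X)W\big)=\E_\mathcal{H}\E_\mathcal{F}(XW)=\E_\mathcal{H}(XW)=\E_\mathcal{H}\E_\mathcal{G}(XW)\le\E_\mathcal{H}\!\left[(\E_\mathcal{G}X^2)^{1/2}(\E_\mathcal{G}W^2)^{1/2}\right].\]
Since $W^2$ is $\mathcal{F}$-measurable, conditional independence gives $\E_\mathcal{G}W^2=\E_\mathcal{H}W^2\le1$, so the last bracket is $\le(\E_\mathcal{G}X^2)^{1/2}$, and taking the supremum over $W$ finishes the proof. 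The argument is essentially a conditional Minkowski inequality in disguise: on an atom of $\mathcal{H}$, writing $p_i,q_j$ for the (independent) conditional probabilities of the atoms of $\mathcal{F}$ and $\mathcal{G}$, it unwinds to the elementary bound $\sum_j q_j(\sum_i p_i x_{ij}^2)^{1/2}\ge(\sum_i p_i(\sum_j q_j x_{ij})^2)^{1/2}$.

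The only genuinely delicate point is the interchange of conditional expectations — the repeated use of "$\E_\mathcal{F}$/$\E_\mathcal{G}$ acts as $\E_\mathcal{H}$" — which is precisely where the (F4)-type hypothesis is needed; a routine truncation ($X$ replaced by $X\mathbbm 1_{\{|X|\le N\}}$, then $N\to\infty$) covers the case where $\E_\mathcal{G}X^2$ is not integrable, both sides being read as possibly $+\infty$.
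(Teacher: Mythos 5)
Your proof is correct, and it takes a genuinely different route from the paper's. The paper first establishes the inequality for $X$ in the linear span of products $\mathbbm{1}_A\mathbbm{1}_B$ with $A\in\mathcal{F}$, $B\in\mathcal{G}$ (after disjointifying the $A$'s), via an explicit computation plus the $\ell^2$-valued conditional Jensen inequality $\E_\mathcal{F}\bigl(\sum_k Z_k^2\bigr)^{1/2}\geq \bigl(\sum_k(\E_\mathcal{F}Z_k)^2\bigr)^{1/2}$, and then extends to all of $L^2(\mathcal{F}\vee\mathcal{G})$ by the density lemma proved just before it, testing both sides against nonnegative $Z\in L^2(\mathcal{F}\wedge\mathcal{G})$. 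You instead reduce both sides to conditional expectations onto $\mathcal{H}=\mathcal{F}\wedge\mathcal{G}$ and run a conditional duality: the right-hand side is realized by testing against a single $\mathcal{F}$-measurable witness $W=\E_\mathcal{F}X\cdot(\E_\mathcal{H}(\E_\mathcal{F}X)^2)^{-1/2}$ with $\E_\mathcal{H}W^2\le 1$, and the chain pull-out / tower / conditional Cauchy--Schwarz, together with the identity $\E_\mathcal{G}W^2=\E_\mathcal{G}\E_\mathcal{F}W^2=\E_\mathcal{H}W^2\le 1$ (the one place conditional independence is genuinely used beyond the measurability reductions), closes the estimate. Your argument is shorter, bypasses the approximation and density machinery entirely, and applies directly to every $X\in L^2$; the paper's argument is more concrete and makes visible the ``independent on each atom of $\mathcal{F}\wedge\mathcal{G}$'' intuition that you relegate to a closing remark. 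The only points to make explicit in a final write-up are routine: phrase the duality step via the single attaining $W$ rather than an essential supremum over a family, and note that $X,W\in L^2$ justifies the pull-out $\E_\mathcal{F}(XW)=W\,\E_\mathcal{F}X$; your truncation remark handles the remaining integrability issues.
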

If $\mathcal{F},\mathcal{G}$ are actually independent or at least $\mathcal{F}\wedge \mathcal{G}$ is purely atomic, the lemma is obvious. 
\begin{proof}
Since
\[\E_\mathcal{F} \left(\E_\mathcal{G} X^2\right)^\frac{1}{2} = \E_\mathcal{F} \E_{\mathcal{G}}\left(\E_\mathcal{G} X^2\right)^\frac{1}{2}= \E_{\mathcal{F}\wedge \mathcal{G}} \left(\E_\mathcal{G} X^2\right)^\frac{1}{2},\]
the LHS is $\mathcal{F}\wedge \mathcal{G}$-measurable and for the same reason, the RHS is. 
We will prove the inequality first for $X\in \mathrm{span}\left\{\mathbbm{1}_{A}\cdot\mathbbm{1}_{B}:A\in\mathcal{F},B\in\mathcal{G}\right\}$. Let 
\[X=\sum_{k\in K}x_{k}\mathbbm{1}_{A_k}\mathbbm{1}_{B_k}\]
for some finite set $K$ and $A_k\in \mathcal{F},B_k\in \mathcal{G}$. Then
\[X=\sum_{k\in K} \mathbbm{1}_{A_k} X_k\]
for some $X_k\in L^2(\mathcal{G})$. By replacing the family $\left(A_k\right)_{k\in K}$ with \[\left(\bigcap_{k\in L}A_k\cap\bigcap_{k\notin L} A_k'\right)_{L\subseteq K},\] we may WLOG assume that $A_k$'s are disjoint, so that $\mathbbm{1}_{A_k}\mathbbm{1}_{A_\ell}=0$ for $k\neq\ell$. Now
\begin{align*}\left(\E_\mathcal{F}X\right)^2=&\left( \sum_k \mathbbm{1}_{A_k}\E_\mathcal{F}X_k\right)^2\\
	=& \sum_{k,\ell} \mathbbm{1}_{A_k}\mathbbm{1}_{A_\ell}\E_\mathcal{F}X_k \E_\mathcal{F} X_\ell\\
	=& \sum_{k} \mathbbm{1}_{A_k}\left(\E_\mathcal{F}X_k\right)^2.
\end{align*}
By the $\mathcal{G}$-measurability of $\E_\mathcal{F}X_k$,
\[\left(\E_\mathcal{G}\left(\E_\mathcal{F}X\right)^2\right)^\frac{1}{2}= \left(\sum_k \E_\mathcal{G}\mathbbm{1}_{A_k} \left(\E_\mathcal{F}X_k\right)^2\right)^\frac{1}{2}.\]
On the other hand,
\begin{align*}\E_\mathcal{F} \left(\E_\mathcal{G} X^2\right)^\frac{1}{2}=& \E_\mathcal{F} \left(\E_\mathcal{G} \sum_{k,\ell}\mathbbm{1}_{A_k}\mathbbm{1}_{A\ell}X_kX_\ell\right)^\frac{1}{2}\\
	=& \E_\mathcal{F} \left(\E_\mathcal{G} \sum_{k}\mathbbm{1}_{A_k}X_k^2\right)^\frac{1}{2}\\
	=& \E_\mathcal{F} \left( \sum_{k}\E_\mathcal{G}\mathbbm{1}_{A_k}X_k^2\right)^\frac{1}{2}.
\end{align*}
Ultimately, writing the $\mathcal{F}\wedge\mathcal{G}$-measurable random variables $\E_\mathcal{G}\mathbbm{1}_{A_k}$ as $Y_k^2$,
\begin{align*}\E_\mathcal{F} \left(\E_\mathcal{G} X^2\right)^\frac{1}{2}=& \E_\mathcal{F} \left( \sum_{k}(Y_kX_k)^2\right)^\frac{1}{2}\\
	\geq &  \left( \sum_{k}(\E_\mathcal{F} Y_kX_k)^2\right)^\frac{1}{2}\\
	=& \left( \sum_{k}(Y_k \E_\mathcal{F} X_k)^2\right)^\frac{1}{2}\\
	=& \left(\E_\mathcal{G}\left(\E_\mathcal{F}X\right)^2\right)^\frac{1}{2}.
\end{align*}
Now fix a nonnegative $Z\in L^2\left(\mathcal{F}\wedge \mathcal{G}\right)$. We already know that 
\[ \E\left(Z\E_\mathcal{F} \left(\E_\mathcal{G} X^2\right)^\frac{1}{2}\right)\geq \E\left(Z\left(\E_\mathcal{G}\left(\E_\mathcal{F}X\right)^2\right)^\frac{1}{2}\right)\]
holds for $X$ in the linear span of $\left\{\mathbbm{1}_{A}\cdot\mathbbm{1}_{B}:A\in\mathcal{F},B\in\mathcal{G}\right\}$. Since both sides are continuous in the $L^2$ norm, it holds for $X\in L^2(\mathcal{F}\vee \mathcal{G})$. Ultimately, by the fact that $Z$ was arbitrary, we conclude that 
\[\E_\mathcal{F} \left(\E_\mathcal{G} X^2\right)^\frac{1}{2}\geq \left(\E_\mathcal{G}\left(\E_\mathcal{F} X\right)^2\right)^\frac{1}{2}.\]
\end{proof}

\begin{deff}A (not necessarily ordered) family $\left(\mathcal{U}_k\right)_{k\in J}$ of $\sigma$-algebras is said to have the Doob property if it satisfies the inequality
	\[\left\|M_{\mathcal{U}}X\right\|_{L^p}\lesssim_p \|X\|_{L^p}\]
	for any $p\in (1,\infty)$.
\end{deff}

\begin{lem}
An (F4)	filtration has the Doob property.
\end{lem}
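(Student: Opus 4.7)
The plan is to dominate the two-parameter maximal function pointwise by an iterated one-parameter maximal function and then invoke the classical Doob inequality twice. The key observation is that the (F4) hypothesis gives the factorization $\E_{i,j}=\E_{i,\infty}\E_{\infty,j}$. Together with Jensen, this yields
\[
|\E_{i,j}X|=|\E_{i,\infty}\E_{\infty,j}X|\leq \E_{i,\infty}|\E_{\infty,j}X|\leq \E_{i,\infty}M_{\mathcal{F}^{(2)}}X.
\]
Since the right-hand side depends only on $i$, taking a supremum over $j$ first and then over $i$ gives
\[
M_{\mathcal{F}}X=\sup_{i,j}|\E_{i,j}X|\leq \sup_i \E_{i,\infty}\bigl(M_{\mathcal{F}^{(2)}}X\bigr)=M_{\mathcal{F}^{(1)}}\bigl(M_{\mathcal{F}^{(2)}}X\bigr).
\]

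Once this pointwise bound is established, for $p\in(1,\infty)$ we apply Doob's $L^p$ maximal inequality (for the one-parameter filtration $\mathcal{F}^{(2)}$) to get $\|M_{\mathcal{F}^{(2)}}X\|_{L^p}\lesssim_p \|X\|_{L^p}$, and then apply it once more (for the one-parameter filtration $\mathcal{F}^{(1)}$) to the function $M_{\mathcal{F}^{(2)}}X$. Chaining the two estimates produces
\[
\|M_{\mathcal{F}}X\|_{L^p}\leq \bigl\|M_{\mathcal{F}^{(1)}}M_{\mathcal{F}^{(2)}}X\bigr\|_{L^p}\lesssim_p \bigl\|M_{\mathcal{F}^{(2)}}X\bigr\|_{L^p}\lesssim_p \|X\|_{L^p},
\]
which is the claimed Doob property.

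There is essentially no serious obstacle here: the only place where one must be careful is verifying that the factorization $\E_{i,j}=\E_{i,\infty}\E_{\infty,j}$ is a consequence of (F4) as stated, which is immediate by letting $k,l\to\infty$ in the definition (and noting that $\E_{i,j}\E_{\infty,\infty}=\E_{i,j}$). One could alternatively interpolate or use weak-$(1,1)$ estimates, but the direct iteration argument above is the shortest route and avoids any extra machinery.
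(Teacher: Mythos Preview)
Your proof is correct and is essentially identical to the paper's: both factor $\E_{i,j}=\E_{i,\infty}\E_{\infty,j}$ via (F4), pull the inner supremum through the conditional expectation, and apply the one-parameter Doob inequality twice. The only cosmetic difference is that the paper works directly with $\E_{n,m}|X|$ rather than $|\E_{i,j}X|$, which spares the Jensen step but is otherwise the same argument.
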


\begin{proof}
By convexity of the supremum and the one-parameter Doob inequality,
\begin{align*}
\left\|\sup_{n,m}\E_{n,m}|X|\right\|_{L^p}=& \left\|\sup_{n}\sup_m \E_{n,\infty}\E_{\infty,m}|X|\right\|_{L^p}\\
\leq & \left\|\sup_{n} \E_{n,\infty} \sup_m \E_{\infty,m}|X|\right\|_{L^p}\\
\lesssim_p & \left\|\sup_m \E_{\infty,m}|X|\right\|_{L^p}\\
\lesssim_p &\|X\|_{L^p}.
\end{align*}	
\end{proof}

\subsection{Martingale inequalities}

\begin{thm}[Burkholder-Rosenthal inequality]
	For any $p\geq 2$ and suitable $f$,
	\[\|f\|_{L^p}\simeq \left(\E\sum_k \left|\Delta_k f\right|^p\right)^\frac{1}{p} + \left(\E\left(\sum_k \E_{k-1}\left|\Delta_k f\right|^2\right)^\frac{p}{2}\right)^\frac{1}{p}.\]
\end{thm}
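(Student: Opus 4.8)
The plan is to reduce both halves of the equivalence to the Burkholder--Davis--Gundy inequality $\|g\|_{L^q}\simeq_q\|S_\mathcal{F}g\|_{L^q}$ (valid for every $1\le q<\infty$, and applied for the harder bound at the exponent $q=p/2$), combined with two classical one-parameter facts: Doob's maximal inequality and the Garsia--Neveu estimate comparing an increasing adapted process with its predictable compensator. Throughout write $d_k=\Delta_k f$ and abbreviate $\sigma=\left(\E\sum_k|d_k|^p\right)^{1/p}$, $\tau=\|s_\mathcal{F}f\|_{L^p}$, $N=\|S_\mathcal{F}f\|_{L^p}$; one works with $f\in L^p$, so that $N<\infty$ a priori, a fact that is used at the very end.

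For the direction $\sigma+\tau\lesssim_p\|f\|_{L^p}$: since $p\ge2$ one has $\sum_k|d_k|^p\le(\sum_k|d_k|^2)^{p/2}$ pointwise, so $\sigma\le N\lesssim_p\|f\|_{L^p}$ by Burkholder--Davis--Gundy. For $\tau$ I would observe that $B_n:=\sum_{k\le n}\E_{k-1}|d_k|^2$ is predictable, increasing, and is the compensator of the adapted increasing process $A_n:=\sum_{k\le n}|d_k|^2$; the convexity bound $B_k^q-B_{k-1}^q\le qB_k^{q-1}\Delta B_k$ with $q=p/2$, followed by conditioning on $\mathcal{F}_{k-1}$, summing in $k$, and H\"older's inequality, gives the Garsia--Neveu bound $\|B_\infty\|_{L^q}\le q\|A_\infty\|_{L^q}$. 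Since $A_\infty=(S_\mathcal{F}f)^2$, this reads $\tau=\|B_\infty\|_{L^{p/2}}^{1/2}\lesssim_p\|A_\infty\|_{L^{p/2}}^{1/2}=N\lesssim_p\|f\|_{L^p}$.

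The substantial direction is $\|f\|_{L^p}\lesssim_p\sigma+\tau$, and by Burkholder--Davis--Gundy it is enough to bound $N$. Writing $N^2=\left\|\sum_k|d_k|^2\right\|_{L^{p/2}}$, I would split $\sum_k|d_k|^2=A'_\infty+B_\infty$, where $A'_n:=\sum_{k\le n}\left(|d_k|^2-\E_{k-1}|d_k|^2\right)$ is an $L^{p/2}$-martingale (its terminal value lies in $L^{p/2}$ by the previous paragraph) and $\|B_\infty\|_{L^{p/2}}=\tau^2$. For $A'_\infty$, apply Burkholder--Davis--Gundy at exponent $p/2$: since $|\Delta A'_k|\le|d_k|^2+\E_{k-1}|d_k|^2$, its square function is dominated by $\left(\sum_k|d_k|^4\right)^{1/2}+\left(\sum_k(\E_{k-1}|d_k|^2)^2\right)^{1/2}$. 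Now estimate $\sum_k|d_k|^4\le\left(\sup_k|d_k|^2\right)(S_\mathcal{F}f)^2$ and $\sum_k(\E_{k-1}|d_k|^2)^2\le\left(\sup_k\E_{k-1}|d_k|^2\right)(s_\mathcal{F}f)^2$, bound $\sup_k\E_{k-1}|d_k|^2\le\sup_n\E_n\left(\sup_k|d_k|^2\right)$ and use Doob's inequality at exponent $p/2>1$, and finally apply H\"older (exponents $2,2$ at level $p/2$) together with $\left\|\sup_k|d_k|\right\|_{L^p}\le\sigma$. This yields $\|A'_\infty\|_{L^{p/2}}\lesssim_p\sigma(N+\tau)$, whence $N^2\lesssim_p\sigma N+\sigma\tau+\tau^2\lesssim_p\sigma N+(\sigma+\tau)^2$; solving this quadratic inequality in the (finite) quantity $N$ gives $N\lesssim_p\sigma+\tau$, hence $\|f\|_{L^p}\lesssim_p\sigma+\tau$.

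The main obstacle is precisely this last step: after peeling off the predictable part, the martingale part of the quadratic variation is only controlled by a \emph{product} of a maximal-type factor $\left\|\sup_k|d_k|\right\|_{L^p}$ with $N$ and $\tau$ themselves, and one has to close the loop by absorbing the $\sigma N$ term into the left-hand side, which genuinely uses $N<\infty$ and does not give a cheap constant. Two minor points also need care: the Doob step requires $p/2>1$, so the boundary value $p=2$ (where the statement collapses to orthogonality, both sides being $\|f\|_{L^2}$ up to the factor $2$) should be treated separately, and the Garsia--Neveu comparison should first be applied to truncated processes to guarantee finiteness before passing to the limit.
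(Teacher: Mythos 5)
The paper states the Burkholder--Rosenthal inequality without proof, citing it as a classical result, so there is no in-paper argument to compare against; your proposal has to stand on its own, and it does. Your reduction is the standard bootstrap proof: both directions correctly funnel through $N=\|S_\mathcal{F}f\|_{L^p}$ via Burkholder--Davis--Gundy, the Garsia--Neveu comparison $\|B_\infty\|_{L^q}\le q\|A_\infty\|_{L^q}$ is derived correctly (convexity, predictability of $B_k$, H\"older), and in the hard direction the decomposition of $(S_\mathcal{F}f)^2$ into its compensator plus the martingale $A'$, the pointwise bounds $\sum_k|d_k|^4\le(\sup_k|d_k|^2)(S_\mathcal{F}f)^2$ and its predictable analogue, the Doob step at exponent $p/2>1$, and the absorption of the $\sigma N$ term into $N^2$ all work as you describe. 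You also correctly flag the two places where care is needed: the a priori finiteness of $N$ (justified by taking $f\in L^p$, or by truncating to finite martingales and passing to the limit) and the degenerate case $p=2$, where both sides reduce to $\|f\|_{L^2}$ by orthogonality. This is a complete and correct proof of the stated theorem, if somewhat more than the paper demands, since the result is only used there as a black box to derive Corollary \ref{burk-ros-ad}.
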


\begin{cor}\label{burk-ros-ad}
	For any $q\geq 1$ and nonnegative $Y_k\in L^q$ adapted to $\mathcal{F}_k$,
	\[\E\left(\sum_k Y_k\right)^q\simeq \E\sum_k Y_k^q + \E\left(\sum_k \E_{k-1} Y_k\right)^q.\]
\end{cor}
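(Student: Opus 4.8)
The plan is to identify the three quantities in the statement with the building blocks of the Burkholder--Rosenthal inequality applied to a carefully chosen martingale, after enlarging the probability space by independent signs. Fix $p:=2q\ge 2$ and pass to $\Omega\times\{-1,1\}^{\N}$ equipped with $\P$ times the uniform measure; write $\varepsilon_k$ for the $k$-th coordinate of the second factor, so $(\varepsilon_k)_k$ is an i.i.d.\ Rademacher sequence independent of $\mathcal{F}_\infty$. Enlarge the filtration to $\mathcal{H}_k:=\mathcal{F}_k\vee\sigma(\varepsilon_1,\dots,\varepsilon_k)$, with $\mathcal{H}_0$ trivial. Since $\sqrt{Y_k}$ is $\mathcal{F}_k$-measurable and $\varepsilon_k$ is independent of $\mathcal{F}_k\vee\sigma(\varepsilon_1,\dots,\varepsilon_{k-1})=\mathcal{H}_{k-1}\vee\mathcal{F}_k$ with mean $0$, the variables $d_k:=\varepsilon_k\sqrt{Y_k}$ form an $(\mathcal{H}_k)$-martingale difference sequence (condition first on $\mathcal{F}_k\vee\sigma(\varepsilon_{<k})$, then on $\mathcal{H}_{k-1}$). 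To stay within the hypotheses of Burkholder--Rosenthal I would work with the truncated martingales $f^{(n)}:=\sum_{k\le n}d_k$, which lie in $L^p$ because each $Y_k\in L^q$, and pass to the limit $n\to\infty$ at the end via monotone convergence.

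Then one reads off, for $f=f^{(n)}$ and the exponent $p=2q$, the two sums and the norm appearing in Burkholder--Rosenthal. First, $|d_k|^p=\big(Y_k^{1/2}\big)^{2q}=Y_k^{\,q}$, so the ``$p$-variation'' term is $\E\sum_{k\le n}Y_k^{\,q}$. Second, $\E_{\mathcal{H}_{k-1}}|d_k|^2=\E_{\mathcal{H}_{k-1}}\big(\varepsilon_k^2 Y_k\big)=\E_{\mathcal{H}_{k-1}}Y_k=\E_{k-1}Y_k$, where the last equality holds because $\sigma(\varepsilon_1,\dots,\varepsilon_{k-1})$ is independent of $\sigma(Y_k)\vee\mathcal{F}_{k-1}$; hence the ``bracket'' term is $\E\big(\sum_{k\le n}\E_{k-1}Y_k\big)^{p/2}=\E\big(\sum_{k\le n}\E_{k-1}Y_k\big)^{q}$. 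Thus Burkholder--Rosenthal (raised to the power $p$) gives
\[\big\|f^{(n)}\big\|_{L^p}^{p}\;\simeq\;\E\sum_{k\le n}Y_k^{\,q}\;+\;\E\Big(\sum_{k\le n}\E_{k-1}Y_k\Big)^{q}.\]
On the other hand, conditionally on $\mathcal{F}_\infty$ the numbers $\sqrt{Y_k}$ are constants and $(\varepsilon_k)$ is a Rademacher sequence, so Khintchine's inequality yields $\E\big[|f^{(n)}|^{p}\mid\mathcal{F}_\infty\big]\simeq_p\big(\sum_{k\le n}Y_k\big)^{p/2}=\big(\sum_{k\le n}Y_k\big)^{q}$; integrating, $\|f^{(n)}\|_{L^p}^{p}\simeq \E\big(\sum_{k\le n}Y_k\big)^{q}$. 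Comparing the two displays and letting $n\to\infty$ proves the corollary --- and note that both inequalities ``$\lesssim$'' and ``$\gtrsim$'' come out at once, being the upper and lower Burkholder--Rosenthal bounds measured against $\|f^{(n)}\|_{L^p}^{p}\simeq\E(\sum_{k\le n}Y_k)^{q}$.

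I do not expect any genuine difficulty here; the only things to be careful about are the elementary bookkeeping for the enlarged filtration --- in particular verifying that $d_k$ really is an $(\mathcal{H}_k)$-martingale difference and that conditioning on $\mathcal{H}_{k-1}$ does not reach past $\mathcal{F}_{k-1}$ when applied to $Y_k$ --- and the truncation/limiting step that licenses the use of Burkholder--Rosenthal and Khintchine (alternatively one can invoke the $L^p$-boundedness of the martingale square function in place of Khintchine, since $S f^{(n)}=(\sum_{k\le n}Y_k)^{1/2}$).
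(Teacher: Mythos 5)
Your proposal is correct and is essentially the paper's own argument: the same randomization $f=\sum_k \varepsilon_k\sqrt{Y_k}$ on the product space, the same enlarged filtration, and Burkholder--Rosenthal with $p=2q$; the paper merely invokes Burkholder--Gundy where you use conditional Khintchine (an alternative you yourself note), and these are interchangeable here.
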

\begin{proof}
	If $Y_k$ are defined on $\Omega$, define $f$ on $\Omega\otimes \Z_2^\N$ by $f=\sum_k Y_k^\frac12 r_k$, where $r_k$ are Rademacher variables independent of $Y_k$'s. Then $f$ may be thought of as a martingale with respect to the filtration $\left(\mathcal{F}_k\otimes \sigma\left(r_1,\ldots,r_k\right) \right)_{k=1}^\infty$. In particular, $\Delta_k f=Y_k^\frac12 r_k$, so $\left|\Delta_k f\right|= Y_k^\frac12$. By Burkholder-Gundy,
	\[\begin{aligned} \E\left(\sum_k Y_k\right)^{p/2}  =&\E\left(\sum_k \left|Y_k^\frac12 r_k\right|^2 \right)^{p/2}\\
		=&  \E\left(\sum_k \left|\Delta_k f\right|^2 \right)^{p/2}\\
		\simeq & \|f\|_{L^p}^p\\
		\simeq & \E\sum_k \left|\Delta_k f\right|^p + \E\left(\sum_k \E_{k-1}\left|\Delta_k f\right|^2\right)^{p/2} \\
		= & \E\sum_k Y_k^{p/2} + \E\left(\sum_k \E_{k-1}Y_k\right)^{p/2}.
	\end{aligned}\]
	By taking $p:=2q$ we recover the desired inequality.
\end{proof}
\begin{cor}\label{iter-burk-ros}
	Let $q\geq 1$ and $Y_{i,j}$ be adapted to $\mathcal{F}_{i,j}$. Then
	\[\begin{aligned} \E\left(\sum_{i,j}Y_{i,j}\right)^q\simeq & \sum_{i,j}\E Y_{i,j}^q\\
		+& \E\left(\sum_{i,j}\E_{i-1,j-1}Y_{i,j}\right)^q\\
		+& \sum_i\E\left(\sum_j \E_{\infty,j-1}Y_{i,j}\right)^q\\
		+& \sum_j\E\left(\sum_i \E_{i-1,\infty}Y_{i,j}\right)^q.\end{aligned}\]
\end{cor}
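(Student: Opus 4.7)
The plan is to iterate Corollary \ref{burk-ros-ad} twice: once along the $i$-direction using the one-parameter filtration $\mathcal{F}^{(1)}=(\mathcal{F}_{i,\infty})_i$, and once along the $j$-direction using $\mathcal{F}^{(2)}=(\mathcal{F}_{\infty,j})_j$. At each stage I will invoke the (F4) identity $\E_{i,\infty}\E_{\infty,j}=\E_{i,j}$ to identify the composed conditional expectations that appear.

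Set $Z_i:=\sum_j Y_{i,j}$, which is nonnegative and $\mathcal{F}_{i,\infty}$-measurable. A first application of Corollary \ref{burk-ros-ad} to $(Z_i)_i$ with respect to $\mathcal{F}^{(1)}$ gives
\[
\E\Bigl(\sum_{i,j}Y_{i,j}\Bigr)^q \simeq \sum_i \E Z_i^q + \E\Bigl(\sum_i \E_{i-1,\infty}Z_i\Bigr)^q,
\]
and the two resulting terms will each split into two of the four terms in the claim.

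For the first, fix $i$; since $\mathcal{F}_{i,j}\subseteq \mathcal{F}_{\infty,j}$, the sequence $(Y_{i,j})_j$ is adapted to $\mathcal{F}^{(2)}$, so another application of Corollary \ref{burk-ros-ad} yields
\[
\E Z_i^q \simeq \sum_j \E Y_{i,j}^q + \E\Bigl(\sum_j \E_{\infty,j-1}Y_{i,j}\Bigr)^q,
\]
and summing over $i$ reproduces the first and third terms on the RHS of the target. For the second, I interchange summations: $\sum_i \E_{i-1,\infty}Z_i=\sum_j U_j$ with $U_j:=\sum_i \E_{i-1,\infty}Y_{i,j}$. By (F4), $\E_{i-1,\infty}Y_{i,j}=\E_{i-1,j}Y_{i,j}$, so $U_j$ is $\mathcal{F}_{\infty,j}$-measurable, and Corollary \ref{burk-ros-ad} applied to $(U_j)_j$ with respect to $\mathcal{F}^{(2)}$, combined with the further (F4) identity $\E_{\infty,j-1}\E_{i-1,\infty}Y_{i,j}=\E_{i-1,j-1}Y_{i,j}$, gives
\[
\E\Bigl(\sum_j U_j\Bigr)^q \simeq \sum_j \E\Bigl(\sum_i \E_{i-1,\infty}Y_{i,j}\Bigr)^q + \E\Bigl(\sum_{i,j}\E_{i-1,j-1}Y_{i,j}\Bigr)^q,
\]
which are the fourth and second terms of the target.

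The only real obstacle is bookkeeping: at each step I must verify which one-parameter filtration the current sequence is adapted to, and invoke (F4) in the appropriate direction so that the mixed conditional expectation $\E_{i-1,j-1}$ emerges cleanly from the composition $\E_{\infty,j-1}\E_{i-1,\infty}$. Nonnegativity and $L^q$-integrability propagate automatically through the iteration since all intermediate quantities are dominated by the original double sum.
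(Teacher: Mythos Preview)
Your proof is correct and follows exactly the approach the paper sketches: apply Corollary~\ref{burk-ros-ad} first to $\bigl(\sum_j Y_{i,j}\bigr)_i$ along $\mathcal{F}^{(1)}$, then to each of the two resulting summands along $\mathcal{F}^{(2)}$, using (F4) to rewrite the composed conditionals. The paper's proof is a one-line indication of this same iteration; you have simply written out the bookkeeping in full.
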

\begin{proof}
	Firstly, we apply Corollary \ref{burk-ros-ad} to $\left(\sum_j Y_{i,j}\right)_i$ adapted to $\mathcal{F}_{i,\infty}$ and proceed with each summand in a similar fashion. 
\end{proof}

\subsection{Duality results}

\begin{deff}
For \emph{any} two families $\mathcal{U}=\left(\mathcal{U}_k\right)_{k\in J}$, $\mathcal{V}=\left(\mathcal{V}_k\right)_{k\in J}$ of $\sigma$-algebras and $p,q\in[1,\infty]$, we define a norm on $\mathcal{V}$-adapted sequences of random variables by
\[\left(X_k\right)\mapsto \left(\E\left( \sum_k \E_{\mathcal{U}_k} \left|X_k\right|^p \right)^{q/p}\right)^{1/q}=:\|X\|_{L^q\left(\mathcal{V},\ell^p\mid \mathcal{U}\right)}.\]
\end{deff}

\begin{center}
	\parbox[][][]{12cm}{\textsf{\footnotesize{It may also be thought of as 
				\[ \|X\|_{L^q\left(\mathcal{V},\ell^p\mid \mathcal{U}\right)}= \left\| \left\| \left(\left(\E_{\mathcal{U}_k}\left|X_k\right|^p\right)^{1/p}\right)_{k\in J}\right\|_{\ell^p}\right\|_{L^q}.\]
				In particular, in the case of $\mathcal{V}$ being a product filtration on $\Omega=\Xi^\N$ for some probability space $\Xi$ and $\mathcal{U}=\prevfiltr{V}$, the operator $\E_{\mathcal{U}_k}$ applied to $|X_k|$ simply integrates away the dependence on the $k$-th variable, and then the space $L^q\left(\mathcal{V},\ell^p\mid \mathcal{U}\right)$ becomes just the subspace of $L^q\left(\Omega,\ell^p\left(\N,L^p(\Xi)\right)\right)$ consisting of $\prevfiltr{V}$-adapted sequences of $L^p(\Xi)$-valued random variables.}}}
\end{center}

The inequalities in the following lemma are far from sharp, but they will allow us to infer that the spaces we are dealing with are reflexive. 

\begin{lem}
Suppose $\mathcal{U},\mathcal{V}$ have the Doob property and $p,q\in(1,\infty)$. Then:\\
(i) if $p\geq q$, then $\|X\|_{L^q\left(\mathcal{V},\ell^p\mid \mathcal{U}\right)}\leq \|X\|_{L^p\left(\ell^p\right)}=\left(\sum_k \E |X_k|^p\right)^\frac{1}{p}$;\\
(ii) if $p<q$, then $\|X\|_{L^q\left(\mathcal{V},\ell^p\mid \mathcal{U}\right)}\lesssim_{p,q} \|X\|_{L^q\left(\ell^p\right)}= \left(\E\left(\sum_k |X_k|^p\right)^{\frac{q}{p}}\right)^\frac{1}{q}$;\\
(iii) any linear functional on $L^q\left(\mathcal{V},\ell^p\mid \mathcal{U}\right)$ is represented by a $\mathcal{V}$-adapted sequence of random variables.
\end{lem}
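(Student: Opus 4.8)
For (i), write $\|X\|_{L^q(\mathcal{V},\ell^p\mid\mathcal{U})}^q=\E[Z^{q/p}]$ with $Z:=\sum_k\E_{\mathcal{U}_k}|X_k|^p\geq 0$. Since $p\geq q$, the exponent $q/p$ lies in $(0,1]$, so $t\mapsto t^{q/p}$ is concave on $[0,\infty)$ and Jensen's inequality gives $\E[Z^{q/p}]\leq(\E Z)^{q/p}$. Because $\E\E_{\mathcal{U}_k}=\E$ and by Tonelli, $\E Z=\sum_k\E|X_k|^p$, and raising to the power $1/q$ yields exactly $\|X\|_{L^q(\mathcal{V},\ell^p\mid\mathcal{U})}\leq(\sum_k\E|X_k|^p)^{1/p}$. (Here neither the Doob property nor $p,q>1$ is needed.)

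For (ii), set $r:=q/p\in(1,\infty)$ and $Y_k:=|X_k|^p\geq 0$. Since $\|X\|_{L^q(\mathcal{V},\ell^p\mid\mathcal{U})}=\|\sum_k\E_{\mathcal{U}_k}Y_k\|_{L^r}^{1/p}$ and $\|X\|_{L^q(\ell^p)}=\|\sum_k Y_k\|_{L^r}^{1/p}$, it suffices to prove $\|\sum_k\E_{\mathcal{U}_k}Y_k\|_{L^r}\lesssim_{p,q}\|\sum_k Y_k\|_{L^r}$, which I would do by duality. For nonnegative $g$ with $\|g\|_{L^{r'}}\leq 1$, the tower property gives $\E[g\,\E_{\mathcal{U}_k}Y_k]=\E[(\E_{\mathcal{U}_k}g)\,Y_k]\leq\E[(M_{\mathcal{U}}g)\,Y_k]$; summing over $k$ (Tonelli) and applying Hölder yields $\E[g\sum_k\E_{\mathcal{U}_k}Y_k]\leq\|M_{\mathcal{U}}g\|_{L^{r'}}\,\|\sum_k Y_k\|_{L^r}$. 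As $r'\in(1,\infty)$ and $\mathcal{U}$ has the Doob property, $\|M_{\mathcal{U}}g\|_{L^{r'}}\lesssim_{p,q}1$; taking the supremum over such $g$ completes the bound. This is the sole place the Doob property is used, and only at the conjugate exponent $r'$.

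For (iii), let $\phi$ be a bounded linear functional, put $s:=\max(p,q)$, and for a $\mathcal{V}_k$-measurable $X_k$ write $\iota_k X_k$ for the $\mathcal{V}$-adapted sequence equal to $X_k$ in coordinate $k$ and to $0$ elsewhere. Applying (i) when $p\geq q$, or (ii) when $p<q$, to $\iota_k X_k$ gives $\|\iota_k X_k\|_{L^q(\mathcal{V},\ell^p\mid\mathcal{U})}\lesssim_{p,q}\|X_k\|_{L^s}$, so $X_k\mapsto\phi(\iota_k X_k)$ is a bounded functional on $L^s(\mathcal{V}_k)$; by the Riesz representation theorem, and after replacing the representer $\eta_k$ by its $\E_{\mathcal{V}_k}$-projection, there is a $\mathcal{V}_k$-measurable $\xi_k\in L^{s'}$ with $\phi(\iota_k X_k)=\E[\xi_k X_k]$ for all $X_k\in L^s(\mathcal{V}_k)$. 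It then remains to observe that the finitely supported $\mathcal{V}$-adapted sequences with bounded entries are dense in $L^q(\mathcal{V},\ell^p\mid\mathcal{U})$: truncating a general $X$ first in the index and then in the values of the finitely many surviving entries, both approximations converge in norm by (conditional and ordinary) dominated convergence, since $\sum_k\E_{\mathcal{U}_k}|X_k|^p$ lies in $L^{q/p}$, hence is a.s.\ finite, whenever $\|X\|_{L^q(\mathcal{V},\ell^p\mid\mathcal{U})}<\infty$. On this dense set $\phi$ agrees with the functional $X\mapsto\sum_k\E[\xi_k X_k]$, so $\phi$ is represented by the $\mathcal{V}$-adapted sequence $(\xi_k)_k$.

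The only mildly delicate point is the assembly step in (iii): one must check that the coordinatewise Riesz representers fit together into a single $\mathcal{V}$-adapted sequence representing $\phi$ on the whole space, which is precisely what the density statement delivers; the two dominated-convergence arguments and the measure-theoretic bookkeeping are routine. I do not expect a real obstacle: (i) is pure convexity (Jensen for the concave map $t\mapsto t^{q/p}$), (ii) is the standard maximal-function duality argument, and the Doob hypothesis enters exactly once, at the conjugate exponent $r'=(q/p)'$ in (ii).
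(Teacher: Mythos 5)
Your parts (i) and (ii) are correct and essentially identical to the paper's argument: (i) is Jensen for the concave map $t\mapsto t^{q/p}$, and (ii) is the same duality computation --- pair $\sum_k \E_{\mathcal{U}_k}|X_k|^p$ against $Z\in L^{r'}$ with $r=q/p$, move $\E_{\mathcal{U}_k}$ onto $Z$, dominate by $M_{\mathcal{U}}Z$, and invoke the Doob property at the conjugate exponent $r'$; the paper writes this on the primal side but it is the same estimate. Part (iii) is where you genuinely diverge. The paper gets representability abstractly: functionals on $L^\alpha(\ell^\beta)$ are represented by sequences, the adapted sequences form a complemented subspace by Dilworth's result, and by (i)--(ii) the space $L^q(\mathcal{V},\ell^p\mid\mathcal{U})$ receives a continuous dense embedding of $L^\alpha(\mathcal{V},\ell^\beta)$ for a suitable choice of $\{\alpha,\beta\}\subseteq\{p,q\}$, so a functional restricts to one already known to be given by an adapted sequence. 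You instead argue coordinatewise: the single-coordinate embeddings $\|\iota_kX_k\|\lesssim\|X_k\|_{L^s}$ with $s=\max(p,q)$ give, via Riesz representation on each $L^s(\mathcal{V}_k)$ and projection by $\E_{\mathcal{V}_k}$, an adapted candidate representer, and the density of finitely supported bounded adapted sequences (two dominated-convergence truncations, both of which check out) transfers the identity to the whole space. Your route is more elementary and self-contained --- it avoids citing Dilworth's complementation theorem --- at the cost of the explicit density verification; the paper's route is shorter but leans on an external structural result. Both are sound, and both leave the same very mild loose end (passing from agreement on a dense subspace to the convergence of $\sum_k\E[\xi_kX_k]$ for arbitrary $X$), which is harmless for the way (iii) is used in Lemma~\ref{lazydual}.
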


\begin{proof} 
The part (i) is trivial by Jensen. In order to prove (ii), let $r=\frac{q}{p}$. By the usual H\"older duality,
\begin{align*}
	\|X\|_{L^q\left(\mathcal{V},\ell^p\mid \mathcal{U}\right)}^p= & \left(\E\left(\sum_k \E_{\mathcal{U}_k}|X_k|^p\right)^r\right)^\frac{1}{r}\\
	=& 	\sup_{\|Z\|_{L^{r'}}\leq 1} \E\left(Z \sum_k \E_{\mathcal{U}_k}|X_k|^p\right)\\
	=& \sup_{\|Z\|_{L^{r'}}\leq 1} \sum_k \E\left(|X_k|^p \E_{\mathcal{U}_k}Z\right)\\
	\leq & \sup_{\|Z\|_{L^{r'}}\leq 1} \sum_k \E\left(|X_k|^p M_\mathcal{U} Z\right)\\
	=& \sup_{\|Z\|_{L^{r'}}\leq 1} \E\left(M_\mathcal{U} Z \sum_k |X_k|^p \right)\\
	\leq & \sup_{\|Z\|_{L^{r'}}\leq 1} \left\|M_\mathcal{U} Z\right\|_{L^{r'}} \left\|\sum_k |X_k|^p \right\|_{L^r}\\
	\lesssim_r & \sup_{\|Z\|_{L^{r'}}\leq 1} \left\| Z\right\|_{L^{r'}} \left\|\sum_k |X_k|^p \right\|_{L^r}\\
	=& \left(\E\left(\sum_k |X_k|^p\right)^r\right)^\frac{1}{r}\\
	=& \left(\E\left(\sum_k |X_k|^p\right)^{\frac{q}{p}}\right)^\frac{p}{q}.
\end{align*}
For any $\alpha,\beta\in (1,\infty)$, functionals on $L^\alpha\left(\ell^\beta\right)$ are represented by sequences of random variables, and by Dilworth \cite{dilworth} the adapted sequences $L^\alpha\left(\mathcal{V},\ell^\beta\right)$ are a complemented subspace. By (i) and (ii), the space in question embeds in $L^\alpha\left(\mathcal{V},\ell^\beta\right)$ for $\{\alpha,\beta\}\in\{p,q\}$, so the result follows. 
\end{proof}

\begin{lem}\label{lazydual}\par
For $p,q\in  (1,\infty)$ and two families $\mathcal{U}_k, \mathcal{V}_k$ of $\sigma$-algebras such that $\mathcal{U}_k\subseteq \mathcal{V}_k$ and $\mathcal{U}_k$ has the Doob property, the spaces $L^{q'}\left(\mathcal{V},\ell^{p'}\mid \mathcal{U}\right)$ and $L^q\left(\mathcal{V},\ell^p\mid \mathcal{U}\right)$ are, up to norm equivalence, dual to each other with the usual coupling $\ilsk{X}{Y}=\sum_k \E X_k Y_k$.
\end{lem}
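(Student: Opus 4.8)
The plan is to establish the duality in Lemma~\ref{lazydual} by the standard Hölder/Minkowski duality argument, taking care that the conditional-expectation weights behave well. The key observation is that, since $\mathcal{U}_k\subseteq\mathcal{V}_k$, for a $\mathcal{V}_k$-adapted sequence $(X_k)$ we may write $\E_{\mathcal{U}_k}|X_k|^p$ and the norm $\|X\|_{L^q(\mathcal{V},\ell^p\mid\mathcal{U})}$ is, as indicated in the commentary box, the $L^q$ norm of the $\ell^p$-norm of the nonnegative sequence $\bigl((\E_{\mathcal{U}_k}|X_k|^p)^{1/p}\bigr)_k$. So at the level of the ``sizes'' $u_k:=(\E_{\mathcal{U}_k}|X_k|^p)^{1/p}$ we are simply in $L^q(\ell^p)$, whose dual is $L^{q'}(\ell^{p'})$ by classical vector-valued duality. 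The work is to transfer this back to the adapted coupling $\ilsk{X}{Y}=\sum_k\E X_kY_k$.

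First I would show that every $Y\in L^{q'}(\mathcal{V},\ell^{p'}\mid\mathcal{U})$ defines a bounded functional on $L^q(\mathcal{V},\ell^p\mid\mathcal{U})$ of norm $\lesssim\|Y\|$. For this, bound $|\ilsk{X}{Y}|\le\sum_k\E|X_kY_k|=\sum_k\E\bigl(\E_{\mathcal{U}_k}|X_kY_k|\bigr)$ and apply the conditional Hölder inequality $\E_{\mathcal{U}_k}|X_kY_k|\le(\E_{\mathcal{U}_k}|X_k|^p)^{1/p}(\E_{\mathcal{U}_k}|Y_k|^{p'})^{1/p'}=u_kv_k$; then $\sum_k\E u_kv_k\le\E\|u\|_{\ell^p}\|v\|_{\ell^{p'}}\le\|u\|_{L^q(\ell^p)}\|v\|_{L^{q'}(\ell^{p'})}=\|X\|_{L^q(\mathcal{V},\ell^p\mid\mathcal{U})}\|Y\|_{L^{q'}(\mathcal{V},\ell^{p'}\mid\mathcal{U})}$ by plain Hölder in the $\ell$ and $L$ variables. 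This direction is essentially automatic.

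For the reverse — that every bounded functional arises this way with comparable norm — I would invoke the previous lemma: part (iii) guarantees that any functional $\varphi$ on $L^q(\mathcal{V},\ell^p\mid\mathcal{U})$ is represented by \emph{some} $\mathcal{V}$-adapted sequence $(Y_k)$ via the coupling $\ilsk{\cdot}{Y}$, so the only thing left is the norm estimate $\|Y\|_{L^{q'}(\mathcal{V},\ell^{p'}\mid\mathcal{U})}\lesssim\|\varphi\|$. To get this, test $\varphi$ against a well-chosen $X$. The natural candidate mimics the equality case in the conditional Hölder step: with $v_k=(\E_{\mathcal{U}_k}|Y_k|^{p'})^{1/p'}$, one wants $X_k$ with $\E_{\mathcal{U}_k}|X_k|^p\approx v_k^{q'}$-type weights and $\E X_kY_k\approx\E_{\mathcal{U}_k}|X_k|^p{}^{1/p}\cdot v_k$ pointwise after conditioning; concretely set $X_k=\operatorname{sgn}(Y_k)\,|Y_k|^{p'-1}\,w$ for a suitable nonnegative $\mathcal{U}$-measurable scalar weight $w$ (chosen, as in the classical proof, to be a power of $\|v\|_{\ell^{p'}}$) so that $\E_{\mathcal{U}_k}|X_k|^p=w^p\,v_k^{p'}$ and $\E_{\mathcal{U}_k}(X_kY_k)=w\,v_k^{p'}$. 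A short computation then yields $\ilsk{X}{Y}\gtrsim\|Y\|_{L^{q'}(\mathcal{V},\ell^{p'}\mid\mathcal{U})}^{q'}$ while $\|X\|_{L^q(\mathcal{V},\ell^p\mid\mathcal{U})}\approx\|Y\|_{L^{q'}(\mathcal{V},\ell^{p'}\mid\mathcal{U})}^{q'-1}$, giving the claimed bound after dividing; one should first do this for truncated $Y$ (finitely many nonzero terms, bounded) to be sure $X$ lands in the space, then pass to the limit by monotone convergence. The only subtlety to watch — and the place where the hypothesis $\mathcal{U}_k\subseteq\mathcal{V}_k$ and the Doob property enter — is that the test sequence $X_k$ defined above is genuinely $\mathcal{V}_k$-adapted (it is, since $Y_k$ is $\mathcal{V}_k$-measurable and $w$ is $\mathcal{U}\subseteq\mathcal{V}$-measurable) and that the representing $Y$ from part (iii) of the previous lemma may be taken $\mathcal{V}$-adapted; the Doob property was already consumed there, so here it only guarantees the ambient space is reflexive and the duality is two-sided.

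The main obstacle is bookkeeping rather than conceptual: choosing the scalar weight $w$ and the exponents in the test function so that both $\ilsk{X}{Y}$ and $\|X\|_{L^q(\mathcal{V},\ell^p\mid\mathcal{U})}$ come out as the right powers of $\|Y\|_{L^{q'}(\mathcal{V},\ell^{p'}\mid\mathcal{U})}$ simultaneously, and handling the approximation so that $X$ provably belongs to $L^q(\mathcal{V},\ell^p\mid\mathcal{U})$ before one takes suprema. Once the weight is pinned down, both estimates are one-line applications of conditional and ordinary Hölder, and combining the two directions gives the asserted norm equivalence.
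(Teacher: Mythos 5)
Your overall architecture matches the paper's: conditional H\"older for the easy inequality, part (iii) of the preceding lemma to represent functionals by adapted sequences, and an extremal test sequence to get the reverse norm estimate. But there is a genuine gap in the construction of the test sequence, and it sits exactly where you dismiss the Doob property as ``already consumed.'' Your identities $\E_{\mathcal{U}_k}|X_k|^p=w^p v_k^{p'}$ and $\E_{\mathcal{U}_k}(X_kY_k)=w\,v_k^{p'}$ require the weight $w$ to be $\mathcal{U}_k$-measurable \emph{for every $k$ simultaneously}. The weight that makes the exponent bookkeeping close, namely $w=t^{q'-p'}$ with $t=\bigl(\sum_j\E_{\mathcal{U}_j}|Y_j|^{p'}\bigr)^{1/p'}$, is only $\bigvee_j\mathcal{U}_j$-measurable; it is not $\mathcal{U}_k$-measurable for any fixed $k$, so the conditional expectation does not factor and both displayed identities fail. (It also need not be $\mathcal{V}_k$-measurable, since the hypothesis only gives $\mathcal{U}_k\subseteq\mathcal{V}_k$ termwise, so even the adaptedness of your $X_k$ is not automatic.)

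The repair is the actual content of the paper's proof: the scalar weight must be replaced by the $k$-dependent weight $\E_{\mathcal{U}_k}\bigl(t^{q'-p'}\bigr)$. The paper does the mirror-image computation, building $Y_k=|X_k|^{p-1}\,\E_{\mathcal{U}_k}s^{q-p}\,\sgn X_k$ from a given $X$, with $s=\bigl(\sum_j\E_{\mathcal{U}_j}|X_j|^p\bigr)^{1/p}$. With this conditioned weight the pairing still telescopes exactly to $\E s^q$, but the norm of the test sequence no longer computes exactly: one must dominate $\E_{\mathcal{U}_k}s^{q-p}\le M_{\mathcal{U}}s^{q-p}$, pull the maximal function out of the sum over $k$, and then apply H\"older together with the Doob maximal inequality in a suitable $L^r$ to return to a power of $\E s^q$. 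So the Doob property is used essentially and quantitatively in this very step --- it is not merely a reflexivity guarantee --- and without the conditioning of the weight and the maximal-function estimate the reverse inequality is not established. The remaining structure of your argument (the easy direction, the use of part (iii), truncation and limiting) is sound and agrees with the paper.
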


\begin{proof}
For $p=q$ there is nothing to prove, as $L^q\left(\mathcal{V},\ell^p\mid \mathcal{U}\right)$ becomes just the $\ell^p$-direct sum of $L^p$. Assume WLOG $p<q$. Let $X\in L^q\left(\mathcal{V},\ell^p\mid \mathcal{U}\right)=:V$. Then for any $Y\in L^{q'}\left(\mathcal{V},\ell^{p'}\mid \mathcal{U}\right)=:W$,
\begin{align*}\|X\|_V\cdot\|Y\|_W \geq & \E \left(\sum_k \E_{\mathcal{U}_k} \left|X_k\right|^p\right)^{1/p} \left(\sum_k \E_{\mathcal{U}_k} \left|Y_k\right|^{p'}\right)^{1/p'}\\
\geq & \E\sum_k \left(\E_{\mathcal{U}_k} \left|X_k\right|^p\right)^{1/p} \left(\E_{\mathcal{U}_k} \left|Y_k\right|^{p'}\right)^{1/p'}\\
\geq & \E\sum_k \E_{\mathcal{U}_k}X_kY_k\\
=& \sum_k \E X_k Y_k.
\end{align*}
The equality up to a constant is achieved for 
\[Y_k=\left|X_k\right|^{p-1} \E_{\mathcal{U}_{k}}s^{q-p}\sgn X_k,\] 
where $s=\left(\sum_j \E_{\mathcal{U}_{j}} \left|X_j\right|^p\right)^\frac{1}{p}$. Indeed, for this choice of $Y_k$,
\begin{align*}
	\sum_k \E X_k Y_k = & \sum_k \E \left(\left|X_k\right|^p \E_{\mathcal{U}_{k}} s^{q-p}\right)\\
	=& \sum_k \E \left(  s^{q-p} \E_{\mathcal{U}_{k}}\left|X_k\right|^p\right)\\
	=& \E \left(  s^{q-p} \sum_k \E_{\mathcal{U}_{k}}\left|X_k\right|^p\right)\\
	=& \E \left(  s^{q-p} \cdot s^p\right)\\
	=& \E s^q.
\end{align*}
Moreover applying the H\"older inequality with exponents $\alpha,\beta$ given by 
\[\frac{1}{\alpha}=\frac{q'p}{qp'}, \quad\frac{1}{\beta}= \frac{(q-p)q'}{q},\]
we get
\begin{align*}
	\|Y\|_W^{q'} =& \left\|\left(\sum_k \E_{\mathcal{U}_{k}} \left(\left|X_k\right|^{p-1} \E_{\mathcal{U}_{k}} s^{q-p}\right)^{p'}\right)^{\frac{1}{p'}}\right\|_{L^{q'}}^{q'}\\
=& \left\|\left(\sum_k \E_{\mathcal{U}_{k}}\left( \left|X_k\right|^{p} \left(\E_{\mathcal{U}_k} s^{q-p}\right)^{p'}\right)\right)^{\frac{1}{p'}}\right\|_{L^{q'}}^{q'}\\
=& \left\|\left(\sum_k \left(\E_{\mathcal{U}_k} s^{q-p}\right)^{p'}\E_{\mathcal{U}_{k}} \left|X_k\right|^{p} \right)^{\frac{1}{p'}}\right\|_{L^{q'}}^{q'}\\
	\leq & \left\|\left(\sum_k \left(M_{\mathcal{U}} s^{q-p}\right)^{p'} \E_{\mathcal{U}_{k}} \left|X_k\right|^{p} \right)^{\frac{1}{p'}}\right\|_{L^{q'}}^{q'}\\
	=& \left\|s^{\frac{p}{p'}} M_{\mathcal{U}} s^{q-p}\right\|_{L^{q'}}^{q'}\\
	=& \E s^{\frac{pq'}{p'}} \left(M_{\mathcal{U}} s^{q-p}\right)^{q'}\\
	\leq & \left(\E s^{\frac{pq'}{p'}\alpha}\right)^\frac{1}{\alpha} \left(\E \left(M_{\mathcal{U}} s^{q-p}\right)^{q'\beta}\right)^\frac{1}{\beta}\\
	=& \left(\E s^q\right)^\frac{1}{\alpha} \left(\E \left(M_{\mathcal{U}} s^{q-p}\right)^{\frac{q}{q-p}}\right)^\frac{1}{\beta}\\
	\lesssim & \left(\E s^q\right)^\frac{1}{\alpha} \left(\E \left(s^{q-p}\right)^{\frac{q}{q-p}}\right)^\frac{1}{\beta}\\
	= & \E s^q.
\end{align*}
Thus,
\begin{align*}
	\ilsk{X}{Y} =& \E s^q\\
	=& \left(\E s^q\right)^\frac{1}{q} \left(\E s^q\right)^{1-\frac{1}{q}}\\
	=& \|X\| \left(\E s^q\right)^{\frac{1}{q'}}\\
	\gtrsim & \|X\|_V \|Y\|_W.	
\end{align*}
Since any functional on $W=L^{q'}\left(\mathcal{V},\ell^{p'}\mid \mathcal{U}\right)$ is represented by a $\mathcal{V}$-adapted sequence of random variables, then the norm of $X$ in the dual to $W$ is $$\sup_{\|Y\|_W\leq 1}\ilsk{X}{Y}\simeq \|X\|_{V},$$ thus proving $W^*=V$. But then, by Hahn-Banach Theorem, for any $Y$ in $W$ we have $\|Y\|_W \simeq \sup_{\|X\|_{V}\leq 1}\ilsk{X}{Y}$, which in turn proves that $V^*=W$. 
\end{proof}

\section{Proof of the main theorem}

\begin{thm}
For any adapted sequence $\left(X_{i,j}\right)_{i,j}$,
\begin{align*}\E\left(\sum_{i,j}X_{i,j}^2\right)^\frac{1}{2}\simeq \inf_{X=A+B+C+D}& \sum_{i,j}\E \left|A_{i,j}\right| \\
+& \E\left(\sum_{i,j} \E_{i-1,j-1}B_{i,j}^2\right)^\frac{1}{2}\\
+& \sum_i \E\left(\sum_j \E_{\infty,j-1}C_{i,j}^2\right)^\frac{1}{2}\\
+ &\sum_j \E\left(\sum_i \E_{i-1,\infty}D_{i,j}^2\right)^\frac{1}{2},\end{align*}
where the infimum is taken over all decomposition of $X_{i,j}$ into a sum of adapted sequences $A_{i,j},B_{i,j},C_{i,j},D_{i,j}$. 
\end{thm}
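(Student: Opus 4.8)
The right‑hand side is exactly $\|X\|_{E_A+E_B+E_C+E_D}$, where $E_A,\dots,E_D$ are the Banach lattices of bi‑adapted sequences carrying the four displayed norms, so the claim is that the lattice $\mathcal H$ of bi‑adapted $(X_{i,j})$ with norm $\E(\sum X_{i,j}^2)^{1/2}$ equals $E_A+E_B+E_C+E_D$ with equivalent norms. The plan is to strip the $L^1$ endpoint by $2$‑convexification, which puts everything into the reflexive range, and then to obtain the identity by duality from the iterated Burkholder--Rosenthal inequality (Corollary~\ref{iter-burk-ros}).

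\textbf{Step 1 (convexification).} Computing $\||f|^2\|$ in each of the five norms identifies the $2$‑convexifications:
\[\mathcal H^{(2)}=L^2(\ell^4_{(i,j)}),\qquad E_A^{(2)}=L^2(\ell^2_{(i,j)}),\qquad E_B^{(2)}=L^2\big(\mathcal F,\ell^4_{(i,j)}\mid\prevfiltr{F}\big),\]
\[E_C^{(2)}=\ell^2_i\Big(L^2\big((\mathcal F_{\infty,j})_j,\ell^4_j\mid(\mathcal F_{\infty,j-1})_j\big)\Big),\]
with $E_D^{(2)}$ the mirror image under $i\leftrightarrow j$. Since $X\mapsto X^{(2)}$ and $X\mapsto X^{(1/2)}$ are mutually inverse and each sends an interpolation sum to the interpolation sum of the convexifications (Preliminaries, with universal constants), the theorem is \emph{equivalent} to
\begin{equation}\label{eq:star}
L^2(\ell^4_{(i,j)})_{\mathrm{ad}}=E_A^{(2)}+E_B^{(2)}+E_C^{(2)}+E_D^{(2)}
\end{equation}
with equivalent norms, $(\cdot)_{\mathrm{ad}}$ denoting the closed subspace of bi‑adapted sequences. (One also checks that forming the bi‑adapted subspace commutes, up to norm equivalence, with convexification and interpolation sums; this uses the complementation of adapted sequences in the ambient $L^\alpha(\ell^\beta)$‑lattices quoted from Dilworth, realised by the index‑wise conditional expectation $(X_{i,j})\mapsto(\E_{i,j}X_{i,j})$, a bounded projection on every lattice in sight.)

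\textbf{Step 2 (duality).} Every space in \eqref{eq:star} is reflexive: $L^2(\ell^2)$, $L^2(\ell^4)$ plainly, $L^2(\mathcal F,\ell^4\mid\prevfiltr{F})$ and $L^2((\mathcal F_{\infty,\cdot})_j,\ell^4_j\mid(\mathcal F_{\infty,\cdot-1})_j)$ by the reflexivity lemma of the Preliminaries, and $\ell^2$‑sums and closed subspaces of reflexive spaces are reflexive. Hence \eqref{eq:star} holds iff the dual identity $\big(L^2(\ell^4)_{\mathrm{ad}}\big)^*=\big(E_A^{(2)}\big)^*\cap\big(E_B^{(2)}\big)^*\cap\big(E_C^{(2)}\big)^*\cap\big(E_D^{(2)}\big)^*$ holds with equivalent norms (using $(X+Y)^*=X^*\cap Y^*$, legitimate as finitely supported bounded bi‑adapted sequences are dense everywhere). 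Now Lemma~\ref{lazydual} applies: with the conditioning family equal to the whole (F4) filtration — which has the Doob property — it gives $\big(L^2(\ell^4)_{\mathrm{ad}}\big)^*=L^2(\ell^{4/3})_{\mathrm{ad}}$; with $\prevfiltr{F}$ it gives $\big(E_B^{(2)}\big)^*=L^2(\mathcal F,\ell^{4/3}\mid\prevfiltr{F})$; and with the one‑parameter filtration $(\mathcal F_{\infty,j-1})_j$, together with $(\ell^2_iZ_i)^*=\ell^2_iZ_i^*$, it gives $\big(E_C^{(2)}\big)^*=\ell^2_i\big(L^2((\mathcal F_{\infty,\cdot})_j,\ell^{4/3}_j\mid(\mathcal F_{\infty,\cdot-1})_j)\big)$, and symmetrically for $E_D^{(2)}$.

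\textbf{Step 3 (recognising Burkholder--Rosenthal) and the main obstacle.} For bi‑adapted $(Y_{i,j})$ put $Z_{i,j}:=|Y_{i,j}|^{4/3}$ and $r:=3/2\ge1$; then $(Z_{i,j})$ is nonnegative bi‑adapted, $\|Y\|_{L^2(\ell^{4/3})}^2=\E\big(\sum_{i,j}Z_{i,j}\big)^r$, and the squared norms of $Y$ in the four dual spaces above are $\E\sum_{i,j}Z_{i,j}^r$, $\E\big(\sum_{i,j}\E_{i-1,j-1}Z_{i,j}\big)^r$, $\sum_i\E\big(\sum_j\E_{\infty,j-1}Z_{i,j}\big)^r$, and $\sum_j\E\big(\sum_i\E_{i-1,\infty}Z_{i,j}\big)^r$. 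Thus, after squaring (a factor $4$), the required dual identity is precisely
\[\E\Big(\sum_{i,j}Z_{i,j}\Big)^r\simeq\E\sum_{i,j}Z_{i,j}^r+\E\Big(\sum_{i,j}\E_{i-1,j-1}Z_{i,j}\Big)^r+\sum_i\E\Big(\sum_j\E_{\infty,j-1}Z_{i,j}\Big)^r+\sum_j\E\Big(\sum_i\E_{i-1,\infty}Z_{i,j}\Big)^r,\]
which is Corollary~\ref{iter-burk-ros}; its ``$\gtrsim$'' half yields the easy inclusion (also immediate from $\ell^2\le\ell^1$ and the one‑parameter $H^1_S\lesssim H^1_s$ inequalities) and its ``$\lesssim$'' half the substantial one. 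I expect the genuinely delicate part to be not any single estimate but Steps 1--2: verifying that convexification, interpolation sums and duality can all be carried out \emph{inside} the bi‑adapted subspaces with uniformly controlled constants, keeping track of which conditioning $\sigma$‑algebras survive (note $\mathcal F_{\infty,j-1}$, not $\mathcal F_{i,j-1}$, once the $i$‑direction is ``forgotten'' in $E_C$) and of the fact that bi‑adaptedness of the sequences in $E_C$ is a real constraint beyond $(\mathcal F_{\infty,j})_j$‑adaptedness — this is where Dilworth's complementation theorem must be invoked with care; the rest is bookkeeping.
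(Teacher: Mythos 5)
Your proposal is correct and follows essentially the same route as the paper: $2$-convexification to move into the reflexive range, dualization of each of the five norms via Lemma~\ref{lazydual} (and the $\ell^2$-sum structure for the mixed norms), and reduction of the dual identity, after the substitution $Z_{i,j}=|Y_{i,j}|^{4/3}$ with exponent $3/2$, to the iterated Burkholder--Rosenthal inequality of Corollary~\ref{iter-burk-ros}. The technical points you flag in Steps 1--2 (complementation of adapted sequences \`a la Dilworth, compatibility of convexification and duality with interpolation sums) are exactly the ingredients the paper relies on from its Preliminaries.
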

\begin{proof}
By convexification, the desired inequality is equivalent to 
\begin{align*}\left[\E\left(\sum_{i,j}X_{i,j}^4\right)^\frac{1}{2}\right]^\frac{1}{2} \simeq \inf_{X=A+B+C+D}& \left[\sum_{i,j}\E A_{i,j}^2\right]^\frac{1}{2} \\
+&\left[ \E\left(\sum_{i,j} \E_{i-1,j-1}B_{i,j}^4\right)^\frac{1}{2}\right]^\frac{1}{2}\\
+& \left[\sum_i \E\left(\sum_j \E_{\infty,j-1}C_{i,j}^4\right)^\frac{1}{2}\right]^\frac{1}{2}\\
+ &\left[\sum_j \E\left(\sum_i \E_{i-1,\infty}D_{i,j}^4\right)^\frac{1}{2}\right]^\frac{1}{2}.\end{align*}
This variant has the advantage of featuring norms that allow us to calculate duals explicitly. Indeed, the norm
\[ \left(A_{i,j}\right)\mapsto \left[\sum_{i,j}\E A_{i,j}^2\right]^\frac{1}{2}\]
is self-dual, as it is in fact a Hilbert space norm, namely the $\ell^2$-sum of $L^2\left(\mathcal{F}_{i,j}\right)$. The dual to 
\[ \left(B_{i,j}\right)_{i,j}\mapsto \left[\E\left(\sum_{i,j} \E_{i-1,j-1}B_{i,j}^4\right)^\frac{1}{2}\right]^\frac{1}{2} \]
is, by Lemma \ref{lazydual} applied to $\left(\mathcal{F}_{i-1,j-1},\mathcal{F}_{i,j}\right)$, the norm 
\[ \left(\Phi_{i,j}\right)_{i,j}\mapsto \left[\E\left(\sum_{i,j} \E_{i-1,j-1}\Phi_{i,j}^{\frac{4}{3}}\right)^\frac{3}{2}\right]^\frac{1}{2}.\]
The norm 
\[\left(C_{i,j}\right)_{i,j}\mapsto \left[\sum_i \E\left(\sum_j \E_{\infty,j-1}C_{i,j}^4\right)^\frac{1}{2}\right]^\frac{1}{2}\]
is an $\ell^2$ sum of one parameter norms
\[ \left(X_j\right)_j\mapsto \left[\E\left(\sum_j \E_{\infty,j-1}X_{j}^4\right)^\frac{1}{2}\right]^\frac{1}{2} \]
defined on $\mathcal{F}_{i,\infty}$-measurable $X_j$'s for each $i$ separately. Thus, its dual is the $\ell^2$ sum of their duals, which are known from the one-parameter case or follow from Lemma \ref{lazydual}:
\[\left(\Phi_{i,j}\right)_{i,j}\mapsto \left[\sum_i \E\left(\sum_j \E_{\infty,j-1}\Phi_{i,j}^\frac{4}{3}\right)^\frac{3}{2}\right]^\frac{1}{2}.\]
By swapping the roles of $i$ and $j$, we get the same for 
\[\left(D_{i,j}\right)_{i,j}\mapsto \left[\sum_j \E\left(\sum_i \E_{i-1,\infty}D_{i,j}^4\right)^\frac{1}{2}\right]^\frac{1}{2}.\]
Ultimately, we are left with the dual of 
\[\left(X_{i,j}\right)_{i,j}\mapsto \left[\E\left(\sum_{i,j}X_{i,j}^4\right)^\frac{1}{2}\right]^\frac{1}{2}.\]
By Lemma \ref{lazydual} for $\left(\mathcal{F}_{i,j},\mathcal{F}_{i,j}\right)$ it is 
\[\left(\Phi_{i,j}\right)_{i,j}\mapsto \left[\E\left(\sum_{i,j}\Phi_{i,j}^\frac{4}{3}\right)^\frac{3}{2}\right]^\frac{1}{2}.\]
Ultimately, the desired inequality is by duality equivalent to 
\begin{align*}
	\left[\E\left(\sum_{i,j}Y_{i,j}^\frac{4}{3}\right)^\frac{3}{2}\right]^\frac{1}{2} \simeq & \left[\sum_{i,j}\E Y_{i,j}^2\right]^\frac{1}{2} \\
	+&\left[ \E\left(\sum_{i,j} \E_{i-1,j-1}Y_{i,j}^\frac{4}{3}\right)^\frac{3}{2}\right]^\frac{1}{2}\\
	+& \left[\sum_i \E\left(\sum_j \E_{\infty,j-1}Y_{i,j}^\frac{4}{3}\right)^\frac{3}{2}\right]^\frac{1}{2}\\
	+ &\left[\sum_j \E\left(\sum_i \E_{i-1,\infty}Y_{i,j}^\frac{4}{3}\right)^\frac{3}{2}\right]^\frac{1}{2}.
\end{align*}
Squaring both sides and putting $Z_{i,j}=Y_{i,j}^\frac{4}{3}$ we arrive at 
\begin{align*}
	\E\left(\sum_{i,j}Z_{i,j}\right)^\frac{3}{2}\simeq & \sum_{i,j}\E Z_{i,j}^\frac{3}{2} \\
	+&\E\left(\sum_{i,j} \E_{i-1,j-1}Z_{i,j}\right)^\frac{3}{2}\\
	+&\sum_i \E\left(\sum_j  \E_{\infty,j-1}Z_{i,j}\right)^\frac{3}{2}\\
	+ &\sum_j \E\left(\sum_i \E_{i-1,\infty}Z_{i,j}\right)^\frac{3}{2}.
\end{align*}
This is true by Corollary \ref{iter-burk-ros} and we are done. 
\end{proof}
\begin{cor}
	For any $f$,
	\[\E S_{\mathcal{F}}\gtrsim \E M_{\mathcal{F}}f.\]
\end{cor}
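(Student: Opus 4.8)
The plan is to apply the Theorem to the adapted sequence $X_{i,j}=\Delta_{i,j}f$, which gives $\E S_{\mathcal{F}}f\simeq \inf_{\Delta f=A+B+C+D}(\cdots)$ with the four terms exactly as in the statement. Since $\E S_{\mathcal{F}}f$ dominates that infimum, it suffices to prove that for \emph{every} decomposition $\Delta_{i,j}f=A_{i,j}+B_{i,j}+C_{i,j}+D_{i,j}$ into adapted sequences one has $\E M_{\mathcal{F}}f\lesssim$ (the four-term sum); by a routine truncation I may assume $f$ is a finite martingale, so all the sums below converge.

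I would first normalise the decomposition so that each piece is a genuine two-parameter martingale difference. Applying $\Delta_{i,j}$ to the identity $\Delta_{i,j}f=A_{i,j}+B_{i,j}+C_{i,j}+D_{i,j}$ and using $\Delta_{i,j}(\Delta_{i,j}f)=\Delta_{i,j}f$ gives $\Delta_{i,j}f=\Delta_{i,j}A_{i,j}+\Delta_{i,j}B_{i,j}+\Delta_{i,j}C_{i,j}+\Delta_{i,j}D_{i,j}$. Each new piece, say $a_{i,j}:=\Delta_{i,j}A_{i,j}$, is $\mathcal{F}_{i,j}$-measurable with $\E_{i-1,j}a_{i,j}=\E_{i,j-1}a_{i,j}=0$, so by (F4) the partial sums $A'_{n,m}:=\sum_{i\le n,\,j\le m}a_{i,j}$ form a martingale for $(\mathcal{F}_{n,m})$; likewise one gets $B',C',D'$. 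Since $\E_{n,m}f=\E_{n,\infty}\E_{\infty,m}f=\sum_{i\le n,\,j\le m}\Delta_{i,j}f$, we obtain $\E_{n,m}f=A'_{n,m}+B'_{n,m}+C'_{n,m}+D'_{n,m}$, hence $\E M_{\mathcal{F}}f\le\sum_{X\in\{A,B,C,D\}}\E\sup_{n,m}|X'_{n,m}|$, and it remains to bound the four summands by $\sum_{i,j}\E|A_{i,j}|$, by $\E(\sum_{i,j}\E_{i-1,j-1}B_{i,j}^2)^{1/2}$, and by the $C$- and $D$-terms.

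The $A'$ bound is immediate: $\E\sup_{n,m}|A'_{n,m}|\le\sum_{i,j}\E|\Delta_{i,j}A_{i,j}|\le 4\sum_{i,j}\E|A_{i,j}|$ by Jensen. For $B'$, now a genuine martingale, the known (F4) inequality $\|F\|_{H^1_M}\lesssim\|F\|_{H^1_s}$ (cf.\ \cite{weiszbook},\cite{brossard}) gives $\E\sup_{n,m}|B'_{n,m}|\lesssim\E(\sum_{i,j}\E_{i-1,j-1}|\Delta_{i,j}B_{i,j}|^2)^{1/2}$; then the pointwise estimate $\E_{i-1,j-1}|\Delta_{i,j}B_{i,j}|^2\le\E_{i-1,j-1}B_{i,j}^2$ — proved by factoring $\Delta_{i,j}=\Delta_{i,\infty}\Delta_{\infty,j}$ and applying the one-parameter conditional contraction $\E_{k-1}|(\E_k-\E_{k-1})Y|^2\le\E_{k-1}Y^2$ once for $\mathcal{F}^{(1)}$ and once for $\mathcal{F}^{(2)}$, interchanging the two conditionings via (F4) — finishes this term. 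For $C'$ (and symmetrically $D'$), I would use $\Delta_{i,j}=\Delta_{i,\infty}\Delta_{\infty,j}$ to write $C'_{n,m}=\sum_{i\le n}\Delta_{i,\infty}g^{(i)}_m$, where $g^{(i)}$ is the $\mathcal{F}^{(2)}$-martingale with differences $\Delta_{\infty,j}C_{i,j}$ (this is $\mathcal{F}_{i,\infty}$-measurable because $C_{i,j}$ is); splitting the suprema in $n$ and in $m$ and invoking the one-parameter Davis inequality for $\mathcal{F}^{(2)}$ together with the one-parameter bound $\|\cdot\|_{H^1_S}\lesssim\|\cdot\|_{H^1_s}$, one is reduced to bounding, for each $i$, the quantity $\E(\sum_j\E_{\infty,j-1}|\Delta_{i,j}C_{i,j}|^2)^{1/2}$ by $\E(\sum_j\E_{\infty,j-1}C_{i,j}^2)^{1/2}$, which summed over $i$ recovers the $C$-term. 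Adding the four estimates and taking the infimum over decompositions finishes the proof.

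The step I expect to be hardest is the $C'$- (and $D'$-) term: the outer $\ell^1$-summation over $i$ (resp.\ $j$) must be preserved while passing through the one-parameter maximal and square-function inequalities, and the conditional-contraction estimate there — with the $\mathcal{F}^{(1)}$-projection $\Delta_{i,\infty}$ nested inside the $\E_{\infty,j-1}$-conditioning — is not merely $L^2$-contractivity but genuinely uses the commutation of the families $\E_{i,\infty}$, $\E_{\infty,j}$ supplied by (F4). The $B'$-term, by contrast, reduces outright to the already-known two-parameter inequality $\|F\|_{H^1_M}\lesssim\|F\|_{H^1_s}$.
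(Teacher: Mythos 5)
Your outline coincides with the paper's proof: apply the Theorem to $X_{i,j}=\Delta_{i,j}f$, hit the decomposition with $\Delta_{i,j}$ so that each piece becomes a two-parameter martingale, and then control the four maximal functions by, respectively, the triangle inequality, Brossard's two-parameter $H^1_s\gtrsim H^1_M$, and (for the $C$- and $D$-pieces) the one-parameter $H^1_s\gtrsim H^1_M$ in the inner index followed by $\sup_i\le\sum_i$ in the outer one. The $A$- and $B$-terms are handled exactly as in the paper; in particular your pointwise contraction $\E_{i-1,j-1}\left|\Delta_{i,j}B_{i,j}\right|^2\le \E_{i-1,j-1}B_{i,j}^2$ is correct, because there the conditioning $\sigma$-algebra sits below both coordinates.

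The genuine gap is the step you yourself flag as hardest and then do not carry out: for the $C$-term you must show, for each fixed $i$, that
\[\E\left(\sum_j \E_{\infty,j-1}\left(\Delta_{i,\infty}Y_j\right)^2\right)^\frac{1}{2}\lesssim \E\left(\sum_j \E_{\infty,j-1}Y_j^2\right)^\frac{1}{2},\qquad Y_j=\Delta_{\infty,j}C_{i,j},\]
and this is not obtainable by any routine conditional contraction. Conditional Jensen only gives the pointwise bound $\E_{\infty,j-1}\left(\E_{i-1,\infty}Y_j\right)^2\le \E_{i-1,j-1}Y_j^2$, and the passage from $\E\bigl(\sum_j\E_{i-1,j-1}Y_j^2\bigr)^{1/2}$ back to $\E\bigl(\sum_j\E_{\infty,j-1}Y_j^2\bigr)^{1/2}$ goes the \emph{wrong} way (the outer $\E_{i-1,\infty}$ can only be pulled out of the square root at the cost of reversing the inequality). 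What is actually needed is the reversed Minkowski-type inequality
\[\E_{\mathcal{F}}\left(\E_{\mathcal{G}}X^2\right)^\frac{1}{2}\ge \left(\E_{\mathcal{G}}\left(\E_{\mathcal{F}}X\right)^2\right)^\frac{1}{2}\]
for $\mathcal{F}=\mathcal{F}_{i-1,\infty}$, $\mathcal{G}=\mathcal{F}_{\infty,j-1}$ conditionally independent under (F4); this is Lemma \ref{fveeg} of the paper, whose proof is itself a nontrivial approximation argument (density of spans of products $\mathbbm{1}_A\mathbbm{1}_B$ in $L^2(\mathcal{F}\vee\mathcal{G})$, reduction to disjoint $A_k$'s, and a vector-valued Jensen step). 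Combined with the $\ell^2$-valued triangle inequality it yields boundedness of $\Phi_{i,j}\mapsto\Delta_{i,\infty}\Phi_{i,j}$ in the $C$-norm, which is the missing ingredient. Until you supply this lemma (or an equivalent), the $C$- and $D$-estimates, and hence the corollary, are not proved.
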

\begin{proof}
	Apply the main theorem for $X_{i,j}=\Delta_{i,j}f$. This results in a decomposition \[\Delta_{i,j}f=A_{i,j}+B_{i,j}+C_{i,j}+D_{i,j}\] 
	into adapted sequences such that
	\begin{align*}
	\E\left(\sum_{i,j} \left|\Delta_{i,j} f\right|^2\right)^\frac{1}{2} \gtrsim & \sum_{i,j}\E \left|A_{i,j}\right| \\
	+& \E\left(\sum_{i,j} \E_{i-1,j-1}B_{i,j}^2\right)^\frac{1}{2}\\
	+& \sum_i \E\left(\sum_j \E_{\infty,j-1}C_{i,j}^2\right)^\frac{1}{2}\\
	+ &\sum_j \E\left(\sum_i \E_{i-1,\infty}D_{i,j}^2\right)^\frac{1}{2}.
	\end{align*}
	Applying $\Delta_{i,j}$ to both sides we get
	\[\Delta_{i,j}f=\Delta_{i,j}A_{i,j}+ \Delta_{i,j}B_{i,j}+ \Delta_{i,j}C_{i,j}+ \Delta_{i,j}D_{i,j}.\] 
We will now show that the operator $\left(\Phi_{i,j}\right)_{i,j}\mapsto \left(\Delta_{i,j}\Phi_{i,j}\right)_{i,j}$ is bounded in all four norms appearing in the inteprolation sum. Trivially,
	\[\sum_{i,j}\E \left|A_{i,j}\right|\gtrsim \sum_{i,j}\E \left|\Delta_{i,j}A_{i,j}\right|.\]
	For any $i,j$, by Jensen inequality $\E_{i-1,\infty} B_{i,j}^2\geq \E_{i-1,\infty}\left(\Delta_{i,\infty} B_{i,j}\right)^2$, so $\E_{i-1,j-1} B_{i,j}^2\geq \E_{i-1,j-1}\left(\Delta_{i,\infty} B_{i,j}\right)^2$ and by symmetry $\E_{i-1,j-1}\left(\Delta_{i,\infty} B_{i,j}\right)^2\geq \E_{i-1,j-1}\left(\Delta_{\infty,j}\Delta_{i,\infty} B_{i,j}\right)^2$. Thus
\[\E\left(\sum_{i,j} \E_{i-1,j-1}B_{i,j}^2\right)^\frac{1}{2}\geq \E\left(\sum_{i,j} \E_{i-1,j-1}\left(\Delta_{i,j}B_{i,j}\right)^2\right)^\frac{1}{2}.\]
Similarly
	\[\sum_i \E\left(\sum_j \E_{\infty,j-1}C_{i,j}^2\right)^\frac{1}{2}\geq \sum_i \E\left(\sum_j \E_{\infty,j-1}\left(\Delta_{\infty,j}C_{i,j}\right)^2\right)^\frac{1}{2}\]
and since for each $i$, by Lemma \ref{fveeg} for $\mathcal{F}_{i-1,\infty}$ and $\mathcal{F}_{\infty,j-1}$,
\begin{align*} \E\left(\sum_j \E_{\infty,j-1}C_{i,j}^2\right)^\frac{1}{2} 
= & \E \E_{i-1,\infty}\left(\sum_j \left[\left(\E_{\infty,j-1}C_{i,j}^2\right)^\frac{1}{2} \right]^2 \right)^\frac{1}{2} \\ 
\geq & \E\left(\sum_j \left[\E_{i-1,\infty} \left(\E_{\infty,j-1}C_{i,j}^2\right)^\frac{1}{2} \right]^2 \right)^\frac{1}{2}\\
\geq &  \E\left(\sum_j \left[\left(\E_{\infty,j-1}\left(\E_{i-1,\infty}C_{i,j}\right)^2\right)^\frac{1}{2} \right]^2 \right)^\frac{1}{2}\\
= & \E\left(\sum_j \E_{\infty,j-1}\left(\E_{i-1,\infty}C_{i,j}\right)^2\right)^\frac{1}{2},
\end{align*}
the operator $\left(\Phi_{i,j}\right)\mapsto \left(\Delta_{i,\infty}\Phi_{i,j}\right)$ is bounded in this norm as well. The same follows for by symmetry for the last one as well. \par
Therefore, putting $A=\sum_{i,j}\Delta_{i,j}A_{i,j}$ etc., we get 
\[f=A+B+C+D\]
and 
\begin{align*}
		\E\left(\sum_{i,j} \left|\Delta_{i,j} f\right|^2\right)^\frac{1}{2} \gtrsim & \sum_{i,j}\E \left|\Delta_{i,j}A\right| \\
	+& \E\left(\sum_{i,j} \E_{i-1,j-1}\left(\Delta_{i,j}B\right)^2\right)^\frac{1}{2}\\
	+& \sum_i \E\left(\sum_j \E_{\infty,j-1}\left(\Delta_{i,j}C\right)^2\right)^\frac{1}{2}\\
	+ &\sum_j \E\left(\sum_i \E_{i-1,\infty}\left(\Delta_{i,j}D\right)^2\right)^\frac{1}{2}.
\end{align*}
Since $\E_{n,m}\Delta_{i,j}= \Delta_{i,j}\mathbbm{1}_{i\leq n\text{ and }j\leq m}$, the inequality 
\[\sum_{i,j}\E \left|\Delta_{i,j}A\right|= \sum_{i,j}\E M_{\mathcal{F}}\Delta_{i,j}A= \sum_{i,j}\left\|\Delta_{i,j}A\right\|_{H^1_M}\geq \left\|\sum_{i,j}\Delta_{i,j}A\right\|_{H^1_M}=\|A\|_{H^1_M}\]
is straightforward. The inequality 
\[\E\left(\sum_{i,j} \E_{i-1,j-1}\left(\Delta_{i,j}B\right)^2\right)^\frac{1}{2}\gtrsim \|B\|_{H^1_M}\]
is just the Brossard theorem \cite{brossard}. Lastly, the mixed norms are bounded by the same result in the one-parameter version and the elementary inequality $\sup_i |x_i|\leq |x_0|+\sum_{i\geq 1} |x_i - x_{i-1}|$:
\begin{align*} \sum_i \E\left(\sum_j \E_{\infty,j-1}\left(\Delta_{i,j}C\right)^2\right)^\frac{1}{2}=& \sum_i \left\|\Delta_{i,\infty}C\right\|_{H^1_s\left[\mathcal{F}^{(2)}\right]}\\
\gtrsim & \sum_i \left\|\Delta_{i,\infty}C\right\|_{H^1_M\left[\mathcal{F}^{(2)}\right]}\\
=& \E\sum_i \sup_j \left|\E_{\infty,j}\Delta_{i,\infty}C\right|\\
\geq & \E\sup_j \sum_i \left|\E_{\infty,j}\Delta_{i,\infty}C\right|\\
\geq & \E\sup_j \sup_i \left|\E_{\infty,j}\E_{i,\infty}C\right|\\
= & \|C\|_{H^1_M}.
\end{align*}
Ultimately,
\[\|f\|_{H^1_S}\gtrsim \|A\|_{H^1_M}+\|B\|_{H^1_M}+\|C\|_{H^1_M}+\|D\|_{H^1_M}\geq \|A+B+C+D\|_{H^1_M}=\|f\|_{H^1_M}.\]
\end{proof}

\end{document}